\documentclass[letterpaper,article,oneside,11pt]{memoir}

\usepackage{amsmath}
\usepackage{amssymb}
\usepackage{amsthm}
\usepackage{mathrsfs}
\usepackage{lmodern}
\usepackage[T1]{fontenc}
\usepackage[utf8]{inputenc}
\usepackage{tikz}
\usepackage{xcolor}
\usepackage{enumitem}

\settrims{0pt}{0pt}
\setheadfoot{0.5in}{0.5in}
\setulmarginsandblock{1.0in}{1.0in}{*}
\setlrmarginsandblock{1.0in}{1.0in}{*}
\checkandfixthelayout

\usepackage[backend=biber,style=alphabetic,giveninits,url=false,doi=false,maxbibnames=50,maxcitenames=50,maxnames=50]{biblatex}
\addbibresource{udSaddleLengthsAgain.bib}
\defbibheading{bibliography}[Bibliography]{\section*{#1}\markboth{#1}{#1}}

\counterwithout{section}{chapter}
\counterwithout{equation}{chapter}
\numberwithin{equation}{section}

\renewcommand{\maketitle}{
 {\centering\textbf{\thetitle}\par
  \vspace{0.5em}
  \centering{\theauthor}\par 
  \vspace{2em}
 }
}

\setsecheadstyle{\bfseries}

\setsecnumdepth{subsubsection}
\setsecnumformat{\csname the#1\endcsname. }
\setsubsecheadstyle{\normalfont\bfseries}
\setaftersubsecskip{-0.5em}
\setsubsubsecheadstyle{\normalfont\bfseries}
\setaftersubsubsecskip{-0.5em}

\OnehalfSpacing
\setlength{\intextsep}{0.5em}
\setlength{\floatsep}{0.5em}
\setlength{\textfloatsep}{0.5em}

\newtheorem{theorem}[equation]{Theorem}

\newtheorem*{claim}{\textit{Claim}}
\newtheorem{proposition}[equation]{Proposition}

\newtheorem{lemma}[equation]{Lemma}
\newtheorem{fact}[equation]{Fact}

\theoremstyle{definition}

\newcommand{\R}{\mathbb{R}}
\newcommand{\N}{\mathbb{N}}
\newcommand{\Z}{\mathbb{Z}}
\newcommand{\SL}{\mathsf{SL}}

\newcommand{\nbar}{|\!|}
\newcommand{\intd}{\,\mathrm{d}}
\newcommand{\haar}{\mathsf{m}}

\newcommand{\define}[1]{\textbf{#1}}
\newcommand{\floor}[1]{\lfloor {#1} \rfloor}
\newcommand{\ceil}[1]{\lceil #1 \rceil}
\newcommand{\re}{\mathsf{Re}}
\newcommand{\im}{\mathsf{Im}}
\newcommand{\sphere}{\mathsf{S}}

\newcommand{\symdiff}{\triangle}
\newcommand{\ann}{\mathsf{Ann}}
\newcommand{\di}{\mathsf{a}}
\newcommand{\ro}{\mathsf{r}}
\newcommand{\sect}{\mathsf{Sec}}
\renewcommand{\arg}{\mathsf{arg}}
\newcommand{\hol}{\mathsf{h}}

\newcommand{\g}{\mathsf{g}}
\newcommand{\x}{\mathsf{x}}
\newcommand{\y}{\mathsf{y}}
\DeclareMathOperator{\D}{\mathsf{D}}


\begin{document}

\title{Uniform distribution of saddle connection lengths in all $\SL(2,\R)$ orbits}
\author{Donald Robertson, with an appendix by Benjamin Dozier}

\let\thefootnote\relax\footnote{Donald Robertson\\Department of Mathematics, University of Manchester, UK donald.robertson@manchester.ac.uk}

\let\thefootnote\relax\footnote{Benjamin Dozier\\Department of Mathematics, Cornell University, USA benjamin.dozier@cornell.edu}

\maketitle

\begin{abstract}
For every flat surface, almost every flat surface in its $\SL(2,\R)$ orbit has the following property: the sequence of its saddle connection lengths in non-decreasing order is uniformly distributed in the unit interval.
\end{abstract}

\section{Introduction}

Given a closed Riemann surface $X$ of genus at least two, every non-zero holomorphic one-form $\omega$ on $X$ has at least one zero.
If $\Sigma$ is the set of zeroes of such an $\omega$ then on $X \setminus \Sigma$ there is an induced atlas of charts to $\R^2$ all of whose transition maps are translations.
A \define{saddle connection} of a holomorphic one-form $\omega$ is any continuous map $v : [0,1] \to X$ such that $v^{-1}(\Sigma) = \{0,1\}$ and that $v|(0,1)$ is a geodesic segment in every chart of the induced atlas on $X \setminus \Sigma$.
Associated to each saddle connection $v$ is its holonomy vector
\[
\hol(v)
=
\left( \int\limits_v \re(\omega),\int\limits_v \im(\omega) \right)
\]
in $\R^2$.
The group $\SL(2,\R)$ is known to act on pairs $(X,\omega)$ via composition with the charts of the atlas.
The action takes saddle connections to saddle connections, and is such that $\hol(\g v) = \g \hol(v)$ for all $\g \in \SL(2,\R)$ and all saddle connections of $\omega$.

Enumerating the saddle connections of $\omega$ as $n \mapsto v_n$ in such a way that $n \mapsto \nbar \hol(v_n) \nbar$ is non-decreasing one can ask about the distribution of this sequence modulo one.
In \cite{MR4042164} it was proved for every $\SL(2,\R)$ invariant and $\SL(2,\R)$ ergodic probability measure $\mu$ on any stratum $\mathcal{H}$, that the sequence $n \mapsto v_n$ is uniformly distributed modulo one for $\mu$ almost every flat surface.
Here we prove the following refined result.

\begin{theorem}
\label{thm:mainTheorem}
For every flat surface $\omega_0$ of genus at least two and for almost every $\g \in \SL(2,\R)$, the sequence $n \mapsto v_n$ of lengths of saddle connections of $\g \omega_0$ listed in non-decreasing order is uniformly distributed modulo one.
\end{theorem}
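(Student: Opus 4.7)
The plan is to verify Weyl's equidistribution criterion via a second-moment bound on the Haar average over compact subsets of $\SL(2,\R)$, followed by a Borel--Cantelli argument. Write $\Lambda = \{\hol(v) : v\ \text{saddle connection of}\ \omega_0\} \subset \R^2 \setminus \{0\}$ and, for each non-zero integer $k$, set
$$T_R(g) \;=\; \frac{1}{R^2} \sum_{\substack{v \in \Lambda \\ |gv| \leq R}} e^{2\pi i k |gv|}.$$
By the Eskin--Masur quadratic asymptotic for saddle connection counts (applied to $g\omega_0$, uniformly over compact sets of $g$) and Abel summation, the conclusion of Theorem~\ref{thm:mainTheorem} for a given $g$ is equivalent to $T_R(g) \to 0$ as $R \to \infty$ for every such $k$. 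It therefore suffices to show, for each fixed $k$ and each compact $K \subset \SL(2,\R)$,
$$\int_K |T_R(g)|^2 \intd \haar(g) \;=\; O\bigl(R^{-\delta}\bigr)$$
for some $\delta > 0$; Chebyshev and Borel--Cantelli along $R_n = 2^n$ then give $T_{R_n}(g) \to 0$ for $\haar$-a.e.\ $g \in K$, monotonicity in $R$ of the underlying count interpolates to continuous $R$, and exhausting $\SL(2,\R)$ by compact sets finishes the proof.

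Expanding the square splits the integral into diagonal and off-diagonal contributions. The diagonal equals $R^{-4}\int_K \#\{v \in \Lambda : |gv| \leq R\} \intd \haar(g)$, which is $O(R^{-2})$ by Eskin--Masur uniformly on $K$. The off-diagonal
$$\frac{1}{R^4} \sum_{v \neq w \in \Lambda} \int_K e^{2\pi i k(|gv|-|gw|)}\, \chi_{|gv|,\, |gw| \leq R}\, \intd \haar(g)$$
must be controlled by oscillatory cancellation in the phase $\phi_{v,w}(g) = |gv| - |gw|$. Since $|gv|$ is invariant under left multiplication by $SO(2)$, this phase descends to $SO(2) \backslash \SL(2,\R) \cong \mathbb{H}^2$, and a direct calculation in (say) Iwasawa coordinates shows that $\nabla \phi_{v,w}$ is bounded below except on a codimension-one set of pairs with $v, w$ nearly parallel. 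One integration by parts in a direction transverse to a level set of $\phi_{v,w}$ then yields a power saving for each such transverse pair.

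The hard part will be closing the off-diagonal estimate. One has $O(R^4)$ pairs $(v,w)$ while each inner integral is trivially $O(1)$, so a merely constant gain per transverse pair from integration by parts cannot suffice. The key technical input I would expect to need is a pair Siegel--Veech type estimate -- a bound on the number of pairs of saddle connections of $\omega_0$ whose holonomies lie in a prescribed relative configuration -- combined with a dyadic decomposition of the angle and of the length-difference between $v$ and $w$, so that the pair count and the integration-by-parts gain compound to yield the required $R^{-\delta}$. The small set of nearly parallel pairs, where $\nabla \phi_{v,w}$ degenerates, should be sparse enough to be handled by a direct crude count.
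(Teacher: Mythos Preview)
Your architecture is the same as the paper's---Weyl, a second-moment bound over a compact piece of $\SL(2,\R)$, Borel--Cantelli along a geometric sequence, and oscillatory cancellation on the off-diagonal---and your choice to sum over $\{v:|gv|\le R\}$ rather than the first $N$ saddle connections is legitimate (it would spare you the paper's ``annular estimate'', Theorem~\ref{thm:annularEstimate}, whose entire purpose is to swap $\Xi(\di^s\ro^\theta\cdot;N)$ for $\di^s\Xi(\ro^\theta\cdot;N)$). But you explicitly leave the off-diagonal open, and that is where the whole difficulty lies. Your own accounting shows the problem: there are $\asymp R^4$ pairs and only a gain of order $1/|\nabla\phi_{v,w}|$ per pair from one integration by parts, and $|\nabla\phi_{v,w}|$ degenerates exactly when $v,w$ are nearly parallel (and of comparable length). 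The vague hope that such pairs are ``sparse enough'' or that a ``pair Siegel--Veech type estimate'' exists is not a proof; the precise input needed is a sector count of the form $|\Lambda(\omega;R)\cap\sect(I)|\ll |I|R^2$ valid down to arcs $|I|\sim R^{-\beta}$ for some $\beta>0$. This is exactly Theorem~\ref{thm:secCount}, which is the paper's new technical contribution and is used both to control near-parallel pairs in Section~\ref{sec:controllingPairs} and to run the annular estimate in Section~\ref{sec:annularEstimate}. Without it, the dyadic angular decomposition you propose cannot close.

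There is a second, smaller gap: your integrand carries the rough cutoff $\chi_{|gv|,|gw|\le R}$, which depends on $g$, so integration by parts produces boundary terms you have not estimated. The paper avoids this by a doubling trick (Lemma~\ref{lem:doubling_thing} and Proposition~\ref{prop:new_diff}) that manufactures an extra short integral $\frac1S\int_0^S$ in the diagonal direction, then linearises the phase in that variable (Lemma~\ref{lem:linearization}) so the oscillatory integral is over a variable decoupled from the set being summed. Finally, your reduction to $T_R(g)\to 0$ needs only Masur's two-sided quadratic bounds \eqref{eqn:quadraticBounds}, not the Eskin--Masur asymptotic, which is not known for every surface in an orbit.
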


This is an improvement over \cite[Theorem~1]{MR4042164} in two ways.
Firstly, the conclusion is stronger as the result holds for Haar almost-every flat surface in every $\SL(2,\R)$ orbit, not just for almost every flat surface with respect to any ergodic $\SL(2,\R)$ invariant probability measure on a stratum.
Secondly, the use of \cite{MR4031118} and the reliance on spectral gap results for the action of $\SL(2,\R)$ on a stratum found in the proof of \cite[Theorem~1]{MR4042164} are replaced with a more direct argument that only makes use of the following quadratic upper bound on the number of saddle connections in a sector extending \cite{MR3959357}.
The notation is explained after the statement of the theorem.

\begin{theorem}
\label{thm:secCount}
Fix a stratum $\mathcal{H}$ of flat surfaces of genus at least two.
There is a constant $c_4$ with the following property.
For every $\epsilon > 0$ and every flat surface $\omega \in \mathcal{H}$ there is $C(\omega,\epsilon) > 0$ such that
\begin{equation}
\label{eqn:arcBound}
R \ge \dfrac{C(\omega,\epsilon)}{|I|^{2+\epsilon}}
\Rightarrow
|\Lambda(\omega;R) \cap \sect(I)|
\le
c_4 |I| R^2
\end{equation}
for every arc $I \subset \sphere^1$.
Moreover, for each $\epsilon > 0$ the constant $C(\omega,\epsilon)$ depends continuously on $\omega$.
\end{theorem}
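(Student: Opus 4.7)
My strategy is to reduce the count over a thin sector to a count over a round disc on a Teichm\"uller-deformed surface, where the disc version of the quadratic upper bound from \cite{MR3959357} applies. First, using the $\SL(2,\R)$-equivariance of holonomy and the rotation-invariance of both $\Lambda(\omega;R)$ and the Euclidean norm, I rotate $\omega$ so that the arc $I$ is centered at angle $0$ on $\sphere^1$. Then every $(x,y)\in\sect(I)\cap B(0,R)$ satisfies $|x|\le R$ and $|y|\le R|I|/2$.

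Next, set $t=\tfrac{1}{2}\log(1/|I|)$ and apply the diagonal element $g_t=\operatorname{diag}(e^{-t},e^t)$. Its action on holonomies, $(x,y)\mapsto(e^{-t}x,e^ty)$, sends $\sect(I)\cap B(0,R)$ into a rectangle with sides of order $\sqrt{|I|}R$, hence into the disc $B(0,C\sqrt{|I|}R)$ for an absolute constant $C$. Combining this with equivariance and the full-disc quadratic upper bound of \cite{MR3959357} applied to $g_t\omega$ gives
\[
|\Lambda(\omega;R)\cap\sect(I)|
\le
|\Lambda(g_t\omega;C\sqrt{|I|}R)|
\le
c\,(C\sqrt{|I|}R)^2
= c_4\,|I|\,R^2,
\]
with $c$ the stratum-dependent constant from \cite{MR3959357} and $c_4=cC^2$, \emph{provided} that $C\sqrt{|I|}R\ge R_0(g_t\omega)$, where $R_0:\mathcal{H}\to(0,\infty)$ is the continuous threshold function supplied by \cite{MR3959357}.

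It then remains to guarantee this threshold condition under the hypothesis $R\ge C(\omega,\epsilon)/|I|^{2+\epsilon}$. Since $e^t=|I|^{-1/2}$, this reduces to bounding $R_0(g_t\omega)$ polynomially in $e^t$: concretely, $R_0(g_t\omega)\le\tilde{C}(\omega,\epsilon)\,e^{(3+2\epsilon)t}$ suffices, after which a short calculation with the two displayed exponents yields the stated $2+\epsilon$. Because $R_0$ is essentially controlled by the systole of the surface, this in turn reduces to quantifying how fast the systole of $g_t\omega$ may decay in $t$, as a function of the short-saddle-connection data of $\omega$. Continuity of $C(\omega,\epsilon)$ in $\omega$ is then inherited from continuity of $R_0$ on the stratum together with continuity of this short-saddle-connection data.

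\textbf{Main obstacle.} The heart of the argument is the final step. When $\omega$ has short near-horizontal saddle connections, $g_t\omega$ heads into the cusps of $\mathcal{H}$, driving $R_0(g_t\omega)$ upward. Quantitatively tracking short saddle connections of $\omega$ through the deformation $g_t$, and showing that the resulting growth of $R_0$ is polynomial in $e^t$ with constants continuous in $\omega$, is where the technical work lies, and is what ultimately fixes the $2+\epsilon$ exponent in \eqref{eqn:arcBound}.
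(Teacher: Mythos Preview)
Your geometric reduction --- rotate so the arc is centered on the horizontal axis, then apply $g_t$ with $e^t = |I|^{-1/2}$ to squeeze $\sect(I)\cap B(0,R)$ into a disc of radius $\asymp\sqrt{|I|}\,R$ on $g_t\omega$ --- is correct and is a genuinely different route from the paper's. The paper does not pass through the disc bound at all; instead it makes explicit the constants inside Dozier's own proof of the sector bound. Concretely, it tracks how the constant $c_I$ in \cite[Proposition~2.1]{MR3959357} depends on $|I|$ (via \cite[Propositions~5.5 and~5.8]{MR3959357}), obtaining $c_I=O(|I|^{-(1-2\delta)})$ for any $0<\delta<1/2$, and then reads off the threshold $R_0=O\big(|I|^{-(2-2\delta)/(1-2\delta)}\,\ell(\omega)^{-(1+\delta)/(1-2\delta)}\big)$ directly. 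Choosing $\delta=\epsilon/(2+2\epsilon)$ gives the exponent $2+\epsilon$, and continuity of $C(\omega,\epsilon)$ comes from continuity of $\ell$.

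The gap in your proposal is exactly the step you flag as the ``main obstacle'', and it is real: you need $R_0(g_t\omega)$ to grow at most polynomially in $e^t$, but mere continuity of $R_0$ on the non-compact stratum $\mathcal{H}$ gives no growth control along a Teichm\"uller geodesic heading into the cusp. What you actually need is an explicit bound of the form $R_0(\omega')\le C_\eta/\ell(\omega')^{1+\eta}$, and extracting that from \cite{MR3959357} requires the same constant-tracking the paper carries out (specialised to the full circle). So you have not avoided the technical work; you have relocated it into the black box of the disc threshold.

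That said, once such an explicit disc threshold is in hand your obstacle is much milder than you suggest. The trivial inequality $\ell(g_t\omega)\ge e^{-t}\ell(\omega)$ immediately gives $R_0(g_t\omega)\le C_\eta\,e^{(1+\eta)t}/\ell(\omega)^{1+\eta}$, with no further ``tracking of short saddle connections through the deformation''; feeding this into $C\sqrt{|I|}\,R\ge R_0(g_t\omega)$ yields a threshold $R\gtrsim \ell(\omega)^{-(1+\eta)}|I|^{-(1+\eta/2)}$, actually sharper than the stated $|I|^{-(2+\epsilon)}$. Your claimed required growth rate $e^{(3+2\epsilon)t}$ is therefore far too pessimistic.
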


The explicit description in \eqref{eqn:arcBound} of how large $R$ must be in terms of the arc length of $I$ is of independent interest.
Theorem~\ref{thm:secCount} will be proved in Appendix~\ref{app:benProof}.

We now introduce some notation that will be used throughout the article.
Given an arc $I \subset \sphere^1$ write $\sect(I)$ for the subset of $\R^2$ that projects radially to $I$.
Given $\alpha < \beta$ with $\beta - \alpha < 2\pi$ write $\sect(\alpha,\beta)$ for the sector $\{ u \in \R^2 : \alpha \le \arg(u) < \beta \}$.
Given $0 \le A < B$ write $\ann(A,B)$ for the annulus $\{ u \in \R^2 : A \le \nbar u \nbar \le B \}$.
Fix a non-zero holomorphic one-form $\omega$ on a closed Riemann surface $X$ of genus at least two.
Write $\Lambda(\omega)$ for the set of saddle connections of $\omega$.
For $R > 0$ write $\Lambda(\omega;R)$ for the set of saddle connections of length at most $R$ on $\omega$.
We enumerate $\Lambda(\omega)$ by $n \mapsto v_n$ so that $n \mapsto \nbar \hol(v_n) \nbar$ is non-decreasing and $v \mapsto \arg(\hol(v)) \in [0,2\pi)$ is non-decreasing on each level set of $n \mapsto \nbar \hol(v_n) \nbar$.
Given $N \in \N$ write $\Xi(\omega;N)$ for $\{ v_n : n \le N \}$ where $n \mapsto v_n$ is the same enumeration as before.
Lastly, write $\ell(\omega;N) = \nbar \hol(v_N) \nbar$ for the length of the $N$th saddle connection on $\omega$ in terms of the above ordering, and $\ell(\omega) = \ell(\omega;1)$ for the length of the shortest saddle connection on $\omega$.
By an abuse of notation, given $B \subset \R^2$ we write $\Lambda(\omega) \cap B$ for the set of saddle connections of $\omega$ whose holomony vectors belong to $B$.
We will interpret $\Lambda(\omega;R) \cap B$ and $\Xi(\omega;N) \cap B$ similarly.

We thank Jon Chaika for suggesting this project and many useful conversations related to it.
We also thank the anonymous referee for a very thorough report.
Donald Robertson was supported by NSF grant DMS 1703597.

\section{Beginning the proof of Theorem~\ref{thm:mainTheorem}}
\label{sec:beginningProof}

Here we begin the proof of Theorem~\ref{thm:mainTheorem} by reducing it to Theorem~\ref{thm:toBound} via the Weyl criterion and a Borel-Cantelli argument.
Fix throughout this section a unit-area flat surface $\omega_0$ of genus at least two.
Masur's work~\cite{MR955824,MR1053805} yields constants $c_1,c_2 > 0$ with
\begin{equation}
\label{eqn:quadraticBounds}
(c_1eR)^2 \le |\Lambda(\omega;R)| \le \left( c_2 \frac{R}{e} \right)^2
\end{equation}
for all $R > 0$.
(The unorthodox expression is for later convenience.)

Write $\mu$ for Haar measure on $\SL(2,\R)$ and $\haar$ for Lebesgue measure on $\R$.
For each $t \in \R$ write
\[
\di^t = \begin{bmatrix} e^t & 0 \\ 0 & e^{-t} \end{bmatrix}
\qquad
\ro^t = \begin{bmatrix} \cos(t) & -\sin(t) \\ \sin(t) & \cos(t) \end{bmatrix}
\]
and define $\D(T) = \{ \ro^\theta \di^t \ro^\psi \in \SL(2,\R) : 0 \le \theta, \psi < 2 \pi, 0 \le t \le T \}$ for all $T > 0$.
We scale $\mu$ once and for all such that $\mu(\D(2)) = 1$.
Write $\chi_p(x) = \exp(2 \pi i p x)$ for each $p \in \Z$ and all $x \in \R$.

Our first reduction is to the following uniform distribution criterion.
Although we prove the theorem for all $\tau > 1$, it suffices for the proof of Theorem~\ref{thm:mainTheorem} to do so for a sequence of values $\tau > 1$ converging to 1.

\begin{theorem}
\label{thm:weylSum}
Fix $p \in \Z$ and $\tau > 1$.
One has
\begin{equation}
\label{eqn:criterionAgain}
\lim_{J \to \infty}
\frac{1}{\ceil{\tau^J}}
\sum_{v \in \Xi(\g \omega_0; \ceil{\tau^J})}
\chi_p(\nbar \hol(v) \nbar) = 0
\end{equation}
for $\mu$ almost-every $\g \in \D(1)$.
\end{theorem}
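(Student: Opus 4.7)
The plan is a second-moment / Borel--Cantelli argument. Writing $N_J = \ceil{\tau^J}$ and letting $S_J(\g)$ denote the Weyl sum in \eqref{eqn:criterionAgain}, it suffices to prove
\[
\sum_{J \ge 1} \int_{\D(1)} |S_J(\g)|^2 \intd\mu(\g) < \infty,
\]
as Chebyshev's inequality then gives $\sum_J \mu(\{|S_J| \ge 1/k\}) < \infty$ for each $k \in \N$, Borel--Cantelli yields $\limsup_J |S_J(\g)| \le 1/k$ for $\mu$-almost every $\g \in \D(1)$, and intersecting over $k$ gives $S_J \to 0$ $\mu$-a.e. The main task is thus to bound the variance of $S_J$.

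The index set $\Xi(\g\omega_0; N_J)$ depends on $\g$ non-smoothly via the length ordering of the $\g$-transported saddle connections, so I would first replace $S_J$ by a $\g$-independent annulus version. By \eqref{eqn:quadraticBounds} the length $\ell(\g\omega_0; N_J)$ is comparable to $\tau^{J/2}$ uniformly over $\D(1)$, so taking a deterministic $R_J \asymp \tau^{J/2}$ makes $\Xi(\g\omega_0; N_J)$ and $\Lambda(\g\omega_0) \cap \ann(0, R_J)$ comparable sets. The discrepancy lies in a thin radial shell, which can be controlled by partitioning $\sphere^1$ into small arcs and summing the sector bound of Theorem~\ref{thm:secCount} over each. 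Combined with interpolation over a geometric grid of radii (possible since $\tau$ may be taken close to $1$ by the note following the theorem statement), this reduces the problem to showing $\sum_J \int_{\D(1)} |T_J|^2 \intd\mu < \infty$ for the annular Weyl sum
\[
T_J(\g) = \frac{1}{R_J^2} \sum_{\substack{v \in \Lambda(\omega_0) \\ \nbar \g \hol(v) \nbar \le R_J}} \chi_p(\nbar \g \hol(v) \nbar),
\]
which is presumably the content of the forthcoming Theorem~\ref{thm:toBound}.

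The technical heart is then the variance estimate for $T_J$. Expanding $|T_J|^2$ and integrating gives a double sum over pairs $(v, w)$ satisfying $\nbar \hol(v) \nbar, \nbar \hol(w) \nbar \le eR_J$ (since $\nbar \g \nbar \le e$ on $\D(1)$). The diagonal $v = w$ contributes $O(R_J^{-2}) = O(N_J^{-1})$, summable in $J$. For off-diagonal pairs I would parametrize $\g = \ro^\theta \di^t \ro^\psi$ via the Cartan decomposition, note that the integrand is independent of $\theta$ because norms are invariant under left rotation, and reduce the integral to $(t, \psi) \in [0,1] \times [0, 2\pi)$. Cancellation in $\chi_p$ then comes from integration by parts on the phase $\Phi_{v,w}(t, \psi) = \nbar \di^t \ro^\psi \hol(v) \nbar - \nbar \di^t \ro^\psi \hol(w) \nbar$. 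The main obstacle is pairs $(v,w)$ for which $\Phi_{v,w}$ has small derivative throughout the integration region --- roughly, pairs whose holonomies are nearly parallel and of nearly equal length. Counting these requires the sector bound $|\Lambda(\omega; R) \cap \sect(I)| \le c_4 |I| R^2$ of Theorem~\ref{thm:secCount}, applied dyadically in $|I|$: balancing the integration-by-parts gain against the sector count of nearly-parallel pairs is the step where Theorem~\ref{thm:secCount} enters essentially, and is likely where the bulk of the work lies.
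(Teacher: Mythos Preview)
Your overall Borel--Cantelli framework is correct and matches the paper, but the route to the moment bound has a genuine gap, and it is precisely the gap that the paper's argument is designed to fill.

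The issue is your replacement of $\Xi(\g\omega_0;N_J)$ by $\Lambda(\g\omega_0)\cap\ann(0,R_J)$ with a \emph{deterministic} $R_J$. Without an effective count for $|\Lambda(\g\omega_0;R)|$ (which the paper explicitly avoids, as it would require spectral-gap input), Masur's bounds \eqref{eqn:quadraticBounds} only tell you that $|\Lambda(\g\omega_0;R_J)|$ lies somewhere in an interval of the form $[c_1' N_J, c_2' N_J]$. Thus the discrepancy $|\Xi(\g\omega_0;N_J)\symdiff\Lambda(\g\omega_0;R_J)|$ can be of order $N_J$ for a positive-measure set of $\g\in\D(1)$, and no choice of $R_J$ fixes this. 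Your appeal to ``interpolation over a geometric grid of radii'' and ``$\tau$ close to $1$'' does not help: $\tau$ is fixed in the statement, and in any case refining the radial grid does nothing about the fact that $\ell(\g\omega_0;N_J)$ genuinely varies by a multiplicative constant as $\g$ ranges over $\D(1)$. The ``thin radial shell'' is not thin. Consequently your passage from $S_J$ to $T_J$ loses a constant fraction of the sum, and the subsequent oscillatory-integral cancellation for $T_J$ cannot be transferred back to $S_J$.

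The paper circumvents this by \emph{not} bounding $\int_{\D(1)}|f_N|^2\intd\mu$ directly. Instead, Lemma~\ref{lem:doubling_thing} and Proposition~\ref{prop:new_diff} use a doubling argument to control the superlevel set $\mu(\{|f_N|^2\ge N^{-\sigma}\})$ by an integral carrying an \emph{additional} short-time average $\frac{1}{S}\int_0^S\cdots\intd s$ with $S=N^{-\delta}$. This extra small parameter is the whole point: because $s\le S$ is small, one only needs to compare $\Xi(\di^s\ro^\theta\di^t\ro^\phi\omega_0;N)$ with $\di^s\Xi(\ro^\theta\di^t\ro^\phi\omega_0;N)$, a perturbation controlled by the annular estimate of Section~\ref{sec:annularEstimate} (which is where Theorem~\ref{thm:secCount} enters). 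Once the index set is effectively frozen, the smallness of $s$ also permits the linearization $\nbar\di^s u\nbar\approx\nbar u\nbar+s\,\alpha(u)$ of Section~\ref{sec:controllingPairs}, after which the $s$-integral is elementary. Your proposed integration by parts in $(t,\psi)$ over the full range $[0,1]\times[0,2\pi)$ would instead face a nonlinear phase with possible stationary points, and the index-set dependence on $(t,\psi)$ would remain unresolved.
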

\begin{proof}[Proof of Theorem~\ref{thm:mainTheorem} assuming Theorem~\ref{thm:weylSum}]
Using Weyl's criterion for uniform distribution and \cite[Lemma~5]{MR4042164} it suffices for the proof of Theorem~\ref{thm:mainTheorem} to produce a sequence $\tau_1 > \tau_2 > \cdots \to 1$ with the property that
\begin{equation}
\label{eqn:criterion}
\lim_{J \to \infty} 
\frac{1}{\ceil{\tau_i^J}}
\sum_{v \in \Xi(\g \omega_0; \ceil{\tau_i^J})}
\chi_p(\nbar \hol(v) \nbar)
=
0
\end{equation}
holds for every $i,p \in \N$ and $\mu$ almost-every $\g \in \SL(2,\R)$.
Since the $\SL(2,\R)$ orbit of $\omega_0$ is countably covered by sets of the form $\D(1) \g \omega_0$ it suffices to work over $\D(1)$.
\end{proof}

Our next reduction will be via a Borel-Cantelli argument similar to the one in \cite{MR4042164}.
We give full details as there are several salient changes; most notably we average over $\Xi(\g \omega_0; N)$ and $\D(1)$ in place of $\Lambda(\omega; N^2)$ and a large compact subset of a stratum.
For each $N \in \N$ define
\[
f_N(\g) = \dfrac{1}{N} \sum_{v \in \Xi(\g \omega_0; N)} \chi_p(\| \hol(v) \|)
\]
for all $\g \in \SL(2,\R)$.

\begin{lemma}
\label{lem:doubling_thing}
For every $N \in \N$, every $0 < S \le 1$ and every $\sigma > 0$ the estimate
\begin{equation}
\label{eqn:before_knapp}
\mu(\{ \g \in \D(1) : |f_N(\g)|^2 \ge N^{-\sigma} \} )^2
\le
4 \int\limits_{\D(1)} \dfrac{\mu(\D(S) \x \cap \{ \g \in \D(1) : |f_N(\g)|^2 \ge N^{-\sigma} \})}{\mu(\D(S/2))} \intd \mu(\x)
\end{equation}
holds.
\end{lemma}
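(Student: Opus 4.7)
The plan is to derive the claimed quadratic bound from a linear estimate combined with the trivial inequality $\mu(E) \le 1$. Write $E = \{\g \in \D(1) : |f_N(\g)|^2 \ge N^{-\sigma}\}$. I aim first to prove
\[
\mu(E) \le \frac{4}{\mu(\D(S/2))} \int_{\D(1)} \mu(\D(S)\x \cap E) \intd\mu(\x);
\]
granted this, the lemma follows because $E \subset \D(1) \subset \D(2)$ and the normalization $\mu(\D(2)) = 1$ force $\mu(E) \le 1$, whence $\mu(E)^2 \le \mu(E)$.

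The first step toward the linear bound is a Fubini computation. Since $\di^{-t} = \ro^{\pi/2}\di^t\ro^{-\pi/2}$ and the angular factors in the $KAK$ decomposition range over all of $[0,2\pi)$, the set $\D(S)$ satisfies $\D(S)^{-1} = \D(S)$. Together with unimodularity of $\mu$, this yields
\[
\int_{\D(1)} \mu(\D(S)\x \cap E) \intd\mu(\x) = \int_E \mu(\D(S)y \cap \D(1)) \intd\mu(y).
\]
Combined with the inclusion $\D(S/2) \subset \D(S)$, the linear estimate then reduces to the geometric inequality
\[
\mu(\D(S/2)y \cap \D(1)) \ge \tfrac{1}{4}\mu(\D(S/2)), \qquad y \in \D(1).
\]

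Proving this geometric inequality is the main obstacle. By bi-$K$-invariance of the operator norm I may assume $y = \di^{t_0}$ with $t_0 \in [0,1]$; the case $t_0 + S/2 \le 1$ is immediate from submultiplicativity of the operator norm, which gives $\D(S/2)y \subset \D(1)$. For $1 - S/2 < t_0 \le 1$, parametrizing $d = \ro^\theta \di^s \ro^\psi \in \D(S/2)$ and computing $\|dy\|^2$ explicitly from the $2\times 2$ matrix shows that $dy \in \D(1)$ is equivalent to
\[
\cos(2\psi) \le \frac{\cosh 2 - \cosh(2s)\cosh(2t_0)}{\sinh(2s)\sinh(2t_0)}.
\]
Integrating against the $KAK$ Haar density $\propto \sinh(2s)\intd\theta\intd s \intd\psi$ then reduces everything to a single-variable integral in $s$, and a direct trigonometric estimate gives the required ratio $\ge 1/4$ uniformly in $t_0 \in (1-S/2,1]$ and $S \in (0,1]$; the worst case is $t_0 = S = 1$, where the ratio is still comfortably above $1/4$.
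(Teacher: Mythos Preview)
Your approach is correct in outline but takes a genuinely different route from the paper's. You prove the stronger \emph{linear} bound
\[
\mu(E) \;\le\; \frac{4}{\mu(\D(S/2))}\int_{\D(1)} \mu(\D(S)\x \cap E)\intd\mu(\x)
\]
and then square trivially via $\mu(E)\le 1$. Your Fubini identity is correct (using $\D(S)^{-1}=\D(S)$ and unimodularity), and the reduction to $y=\di^{t_0}$ is justified by the bi-$K$-invariance of $\D(T)$. The paper instead runs a doubling/maximal-function argument: it introduces $h(\x)=\mu(\D(S/2)\x\cap M)/\mu(\D(S/2))$, shows via a covering step that the set $\{\x\in M:\mu(\D(S)\x\cap M)/\mu(\D(S/2))\ge\mu(M)/2\}$ has measure at least $\int_{2h\ge\mu(M)}h\intd\mu \ge \mu(M)/2$, and finishes with Markov's inequality. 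The paper's method is softer---it never needs the pointwise constant $1/4$ nor any explicit hyperbolic-ball computation, and would work verbatim in a more general doubling setting---whereas your argument is more direct and in fact yields a sharper inequality.

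That said, your final geometric step is asserted rather than proved, and it is the crux of your argument. The inequality $\mu(\D(S/2)\di^{t_0}\cap\D(1))\ge\tfrac{1}{4}\mu(\D(S/2))$ for all $t_0\in[0,1]$ and $S\in(0,1]$ is true, but ``a direct trigonometric estimate'' and ``the worst case is $t_0=S=1$'' both need justification. With your formula $\cos(2\psi)\le R(s)$ where $R(s)=(\cosh 2-\cosh(2s)\cosh(2t_0))/(\sinh(2s)\sinh(2t_0))$, one can check that $R(s)$ is monotone decreasing in $t_0$ (differentiate and use $\cosh(2s)<\cosh 2\cosh(2t_0)$), so $t_0=1$ is indeed worst; at $t_0=1$ one has $R(s)=-\coth 2\tanh s$, and the fraction of admissible $\psi$ is $\arccos(\coth 2\tanh s)/\pi$, which for $s\in[0,1/2]$ is bounded below by $\arccos(\coth 2\tanh(1/2))/\pi\approx 0.34>1/4$. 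You should include this verification (or at least its skeleton) rather than leave it to the reader.
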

\begin{proof}
Fix $N \in \N$ and $0 < S \le 1$ and $\sigma > 0$.
Write $M = \{ \g \in \D(1) : |f_N(\g)|^2 \ge N^{-\sigma} \}$.
Consider the function $h : \SL(2,\R) \to [0,1]$ defined by
\[
h(\x) = \dfrac{\mu(\D(S/2) \x \cap M)}{\mu(\D(S/2))}
\]
for all $\x \in \SL(2,\R)$.

\begin{claim}
$\mu \left( \left\{ \x \in M : \dfrac{\mu(\D(S) \x \cap M)}{\mu(\D(S/2))} \ge \dfrac{\mu(M)}{2} \right\} \right)
\ge
\displaystyle \int\limits_{2h \ge \mu(M)} h \intd \mu$
\end{claim}
\begin{proof}
If we have $\y \in \SL(2,\R)$ with $h(\y) \ge \mu(M) / 2$ and $\x \in D(S/2) \y$ then
\[
\dfrac{\mu(\D(S) \x \cap M)}{\mu(\D(S/2))} \ge \dfrac{\mu(M)}{2}
\]
because in this case $\D(S) \x \cap M \supset \D(S/2) \y \cap M$.
Thus
\[
\bigcup_{2h(\y) \ge \mu(M)} \D(S/2) \y \cap M
\le
\left\{ \x \in M : \dfrac{\mu(\D(S) \x \cap M)}{\mu(\D(S/2))} \ge \dfrac{\mu(M)}{2} \right\}
\]
and, combined with
\begin{align*}
&{}
\mu \left( \bigcup_{2h(\y) \ge \mu(M)} \D(S/2) \y \cap M \right)
\\
={}
&
\dfrac{1}{\mu(\D(S/2))} \int\limits_{\SL(2,\R)} \mu \left( \D(S/2) \x \cap \bigcup_{2h(\y) \ge \mu(M)} \D(S/2) \y \cap M \right) \intd \mu(\x)
\\
\ge{}
&
\dfrac{1}{\mu(\D(S/2))} \int\limits_{2h \ge \mu(M)} \mu \left( \D(S/2) \x \cap \bigcup_{2h(\y) \ge \mu(M)} \D(S/2) \y \cap M \right) \intd \mu(\x)
\\
\ge{}
&
\dfrac{1}{\mu(\D(S/2))} \int\limits_{2h \ge \mu(M)} \mu \left( \D(S/2) \x \cap M \right) \intd \mu(\x)
\end{align*}
we have proved the claim.
\end{proof}
Since $M \subset \D(1)$ we have $h = 0$ outside $\D(2)$.
Thus
\[
\int\limits_{2h < \mu(M)} h \intd \mu \le \dfrac{\mu(M)}{2} \mu(\D(2))
\]
and we deduce
\[
\dfrac{\mu(M)}{2}
\le
\left( 1 - \dfrac{\mu(\D(2))}{2} \right) \mu(M)
\le
\mu \left( \left\{ \x \in M : \dfrac{\mu(\D(S) \x \cap M)}{\mu(\D(S/2))} \ge \dfrac{\mu(M)}{2} \right\} \right)
\]
from the claim and our scaling of $\mu$ because the integral of $h$ is $\mu(M)$ by Fubini.
An application of Markov's inequality then furnishes \eqref{eqn:before_knapp}.
\end{proof}

\begin{proposition}
\label{prop:new_diff}
There is $C_1 > 0$ such that for every $N \in \N$, every $0 < S \le 1$ and every $\sigma > 0$ we have
\begin{align*}
&
\mu\left( \left\{ \g \in \D(1) : |f_N(\g)|^2 > \dfrac{1}{N^\sigma} \right\} \right)
\\
\le{}
&
\left(
C_1 N^\sigma
\dfrac{1}{2 \pi} \int\limits_0^{2 \pi}
\int\limits_0^1
\dfrac{1}{2\pi} \int\limits_0^{2\pi}
\dfrac{1}{S} \int\limits_0^S
\dfrac{1}{N^2}
\sum_{v,w \in \Xi(\di^s \ro^\theta \di^t \ro^\phi \omega_0;N)}
\chi_p(\nbar \hol(v) \nbar)
\,
\overline{\chi_p(\nbar \hol(w) \nbar)}
\intd s
\intd \theta
\intd t
\intd \phi
\right)^{1/2}
\end{align*}
\end{proposition}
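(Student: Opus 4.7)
The plan is to combine Lemma~\ref{lem:doubling_thing} with a Markov-type bound on $|f_N|^2$ and then unpack the resulting integral over $\D(S)\x$ using the $KAK$ decomposition of $\SL(2,\R)$, exploiting throughout that $f_N$ is invariant under left multiplication by rotations.

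Writing $M = \{\g \in \D(1) : |f_N(\g)|^2 \ge N^{-\sigma}\}$ (which contains the strict-inequality set appearing in the statement), Markov's inequality yields $\mu(\D(S)\x \cap M) \le N^\sigma \int_{\D(S)\x} |f_N(\g)|^2 \intd\mu(\g)$, and substituting into \eqref{eqn:before_knapp} gives
\[
\mu(M)^2 \le 4 N^\sigma \int_{\D(1)} \dfrac{1}{\mu(\D(S/2))} \int_{\D(S)\x} |f_N(\g)|^2 \intd\mu(\g) \intd\mu(\x).
\]
After taking square roots, the proposition reduces to showing that this right-hand side is bounded by a universal constant times the quadruple integral in the statement.

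To do this I would rewrite the inner integral using unimodularity, $\int_{\D(S)\x} |f_N|^2 \intd\mu = \int_{\D(S)} |f_N(\g\x)|^2 \intd\mu(\g)$, and parameterize $\D(S)$ by $KAK$ as $\g = \ro^\theta \di^s \ro^\psi$ with Haar measure proportional to $J(s) \intd s \intd\theta \intd\psi$ where $J(s)$ is comparable to $\sinh(2s)$. The key observation is that $f_N$ is invariant under left multiplication by rotations: the lengths $\nbar \hol(v) \nbar$ are $\ro^\theta$-invariant, so the sum $\sum_{v \in \Xi(\g\omega_0;N)} \chi_p(\nbar \hol(v) \nbar)$ depends only on the length spectrum and the multiplicities used by the cutoff, rendering the secondary $\arg$-ordering irrelevant. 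Hence the outer $\theta$-integral is trivial. Writing $\x = \ro^\alpha \di^t \ro^\phi \in \D(1)$ one then has $\di^s \ro^\psi \x = \di^s \ro^{\psi+\alpha} \di^t \ro^\phi$, and after the substitution $\theta = \psi + \alpha \pmod{2\pi}$ the $\alpha$-integration is trivial, leaving an integral of $|f_N(\di^s\ro^\theta\di^t\ro^\phi\omega_0)|^2$ over $(s,\theta,t,\phi) \in [0,S] \times [0,2\pi) \times [0,1] \times [0,2\pi)$ weighted by $J(s)J(t)$.

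Finally, to read off the normalizations of the statement I would use $J(s) \le C S$ for $0 \le s \le S \le 1$ (from $\sinh$ vanishing linearly at $0$), $J(t) \le C$ for $0 \le t \le 1$, and $\mu(\D(S/2)) \ge c S^2$ uniformly in $0 < S \le 1$. Hence $J(s) J(t) / \mu(\D(S/2)) \le C'/S$, which supplies exactly the $(1/S) \int_0^S \intd s$ factor, while the angular normalizations $1/(2\pi)$ and the other constants are absorbed into $C_1$. Expanding $|f_N|^2$ as the $v,w$ double sum then completes the reduction. The principal obstacle is the careful bookkeeping of the Jacobian in $KAK$ coordinates—specifically establishing the uniform $1/S$ behavior for $0 < S \le 1$—together with justifying the rotation invariance of $f_N$ despite the $\arg$-based secondary ordering built into the definition of $\Xi(\omega;N)$.
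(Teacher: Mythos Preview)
Your proposal is correct and follows essentially the same route as the paper: apply Lemma~\ref{lem:doubling_thing}, then Markov on $1_M \le N^\sigma |f_N|^2$, then unfold both $\D(S)$ and $\D(1)$ integrals in Cartan coordinates, drop the outer rotation by left $\ro$-invariance of $f_N$, merge the two inner rotations into one, and use $\sinh(S)/\mu(\D(S/2)) \asymp 1/S$ together with $\sinh(t) \le 2$ on $[0,1]$ to obtain the stated normalizations. The only cosmetic difference is that the paper records the Jacobian as $\sinh(s)$ rather than your $\sinh(2s)$, which is immaterial to the argument.
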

\begin{proof}
Fix $N \in \N$ and $0 < S \le 1$ and $\sigma > 0$.
Write $M = \{ \g \in \D(1) : |f_N(\g)|^2 \ge N^{-\sigma} \}$.
We have
\[
\mu(M)^2
\le
4 \int\limits_{\D(1)} \dfrac{\mu(\D(S) \x \cap M)}{\mu(\D(S/2))} \intd \mu(x)
=
4 \int\limits_{\D(1)} \dfrac{1}{\mu(\D(S/2))} \int\limits_{\D(S)} 1_M(\g \x) \intd \mu(\g) \intd \mu(\x)
\]
from Lemma~\ref{lem:doubling_thing}.
The definition of $M$ along with Markov's inequality gives
\[
\mu(M)^2
\le
4 N^\sigma
\int\limits_{\D(1)} \dfrac{1}{\mu(\D(S/2))} \int\limits_{\D(S)} |f_N( \g \x) |^2 \intd \mu(\g) \intd \mu(\x)
\]
and the right-hand integral becomes
\[
\dfrac{1}{2\pi} \int\limits_0^{2\pi} \dfrac{1}{2\pi} \int\limits_0^{2\pi} \int\limits_0^1
\dfrac{1}{\mu(\D(S/2))}
\dfrac{1}{2\pi} \int\limits_0^{2\pi} \dfrac{1}{2\pi} \int\limits_0^{2\pi} \int\limits_0^S
\sinh(t)
\sinh(s)
|f_N(
\ro^\varphi \di^s \ro^{\theta_1}
\ro^{\theta_2} \di^t \ro^\phi) |^2
\intd s \intd \varphi \intd \theta_1 \intd t  \intd \theta_2 \intd \phi
\]
upon writing $\mu$ in terms of the Cartan decomposition of $\SL(2,\R)$ as in \cite[Proposition~5.28]{MR1880691}.
The rotation $\ro^\varphi$ does not affect the sum and may be removed; the integrals over $\ro^{\theta_1}$ and $\ro^{\theta_2}$ together form a convolution and may be combined into a single term; we may bound $\sinh(t)$ by 2 and $\sinh(s)$ by $\sinh(S)$.
For some constant $C_1'$ one has
\[
\dfrac{\sinh(S)}{\mu(\D(S/2))} \sim \dfrac{C_1'}{S}
\]
and, together with the above, we have an absolute constant $C_1 > 0$ with
\[
\mu(M)^2
\le
C_1
N^\sigma
\dfrac{1}{2\pi} \int\limits_0^{2\pi}
\int\limits_0^1
\frac{1}{2\pi} \int\limits_0^{2\pi}
\frac{1}{S} \int\limits_0^{S}
\frac{1}{N^2}
\sum_{v,w \in \Xi(\di^s \ro^\theta \di^t \ro^\phi \omega_0;N)}
\chi_p(\nbar \hol(v) \nbar)
\,
\overline{\chi_p(\nbar \hol(w) \nbar)}
\intd s
\intd \theta
\intd t
\intd \phi
\]
as desired.
\end{proof}

From now on we fix the relationship
\[
S = \dfrac{1}{N^\delta}
\]
between $N \in \N$ and $S > 0$, where $\delta > 0$ is to be determined by future requirements.
(Ultimately $\delta = \tfrac{1}{3}$ will suffice.)
We now reduce the proof of Theorem~\ref{thm:mainTheorem} to the following statement.

\begin{theorem}
\label{thm:toBound}
There is $\eta > 0$ and $N_0 \in \N$ and a constant $C > 0$ such that
\begin{equation}
\label{eqn:toBound}
\dfrac{1}{2 \pi} \int\limits_0^{2 \pi}
\int\limits_0^1
\dfrac{1}{2\pi} \int\limits_0^{2\pi}
\dfrac{1}{S} \int\limits_0^S
\dfrac{1}{N^2}
\sum_{v,w \in \Xi(\di^s \ro^\theta \di^t \ro^\phi \omega_0;N)}
\chi_p(\nbar \hol(v) \nbar)
\,
\overline{\chi_p(\nbar \hol(w) \nbar)}
\intd s
\intd \theta
\intd t
\intd \phi
\le
\dfrac{C}{N^\eta}
\end{equation}
holds for all $N \ge N_0$.
\end{theorem}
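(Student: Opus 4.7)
The plan is to expand $|f_N|^2$ into diagonal and off-diagonal parts and control each via oscillatory integral estimates. The integrand in \eqref{eqn:toBound} equals $|f_N(\di^s \ro^\theta \di^t \ro^\phi)|^2$, so Theorem~\ref{thm:toBound} asserts that a certain average of $|f_N|^2$ is $O(N^{-\eta})$. The diagonal $v = w$ contributes $|\Xi(\g\omega_0;N)|/N^2 = 1/N$ pointwise, absorbed into $C/N^\eta$ for any $\eta \le 1$; the off-diagonal sum is the crux.

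The chief difficulty off the diagonal is that the indexing set $\Xi(\g\omega_0;N)$ depends on $\g$. I would first invoke the quadratic bounds \eqref{eqn:quadraticBounds} to produce deterministic constants $c_\pm > 0$ yielding the uniform sandwich
\[
\Lambda(\g\omega_0;c_-\sqrt{N}) \subseteq \Xi(\g\omega_0;N) \subseteq \Lambda(\g\omega_0;c_+\sqrt{N})
\]
for every $\g$ in the integration range. Only pairs $(v,w)$ of saddle connections of $\omega_0$ with $\max(\nbar\hol(v)\nbar,\nbar\hol(w)\nbar) \lesssim \sqrt{N}$ can possibly contribute, and Fubini reduces the off-diagonal piece to a sum over such pairs of oscillatory integrals
\[
\int_0^{2\pi}\!\!\int_0^1\!\!\int_0^{2\pi}\!\!\frac{1}{S}\int_0^S \chi_p(\nbar\hol(\g v)\nbar-\nbar\hol(\g w)\nbar)\,\mathbf{1}_{v,w\in \Xi(\g\omega_0;N)}\intd s \intd\theta\intd t\intd\phi.
\]

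For each such pair I would replace the indicator by a smooth length cutoff at an appropriate scale and integrate by parts in the dilation parameter $s$. Writing $u = \ro^\theta\di^t\ro^\phi \hol(v)$ with components $u_1, u_2$, a direct computation gives
\[
\partial_s \nbar\di^s u\nbar = \frac{e^{2s}u_1^2 - e^{-2s}u_2^2}{\nbar\di^s u\nbar},
\]
so $\partial_s(\nbar\hol(\g v)\nbar-\nbar\hol(\g w)\nbar)$ is bounded below in magnitude by a positive multiple of $\sqrt{N}$ whenever $u$ and its counterpart $u' = \ro^\theta\di^t\ro^\phi\hol(w)$ are angularly well-separated. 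Integration by parts then yields decay of order $(p\sqrt{N})^{-1}$ per such pair, after accounting for the boundary terms at $s\in\{0,S\}$ and the smoothing error.

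To sum over pairs I would split into well-separated and near-parallel pairs. The well-separated contribution, after the $1/N^2$ normalization, is bounded by $O(1/(p\sqrt{N}))$ using the $O(N)$ total count of eligible saddle connections. Near-parallel pairs, where the $s$-derivative can degenerate, are controlled by Theorem~\ref{thm:secCount}: in an arc $I\subseteq\sphere^1$ of width $|I|$, the number of saddle connections of $\omega_0$ of length at most $c_+\sqrt{N}$ is at most $c_4|I|N$ provided $|I|\gtrsim N^{-1/(2(2+\epsilon))}$, so each thin shell contains $O(|I|^2N^2)$ near-parallel pairs, contributing $O(|I|^2)$ after normalization. The main obstacle is the simultaneous optimization of three small parameters: the smoothing scale of the indicator, the angular threshold $|I|$, and the exponent $\delta$ in $S=N^{-\delta}$. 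It is precisely the scaling $|I|^{-(2+\epsilon)}$ of the threshold in Theorem~\ref{thm:secCount}, rather than some larger polynomial, that leaves enough room for a strictly positive $\eta$, and this is what enables the direct argument here to supplant the spectral-gap input from \cite{MR4042164}.
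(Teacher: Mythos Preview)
Your outline captures the oscillatory--integral half of the argument (roughly Step~2 in the paper), but it skips what is in fact the main difficulty: controlling the $s$-dependence of the indexing set $\Xi(\g\omega_0;N)$. You write that you would ``replace the indicator by a smooth length cutoff at an appropriate scale'' and later refer in passing to ``the smoothing error'', but no such scale exists that makes this error negligible. The sandwich $\Lambda(\g\omega_0;c_-\sqrt{N}) \subset \Xi(\g\omega_0;N) \subset \Lambda(\g\omega_0;c_+\sqrt{N})$ has a gap of size $\asymp N$, because the only a~priori information on $\ell(\g\omega_0;N)$ is that it lies between $\sqrt{N}/(c_2\log N)$ and $\sqrt{N}/c_1$ (Lemma~\ref{lem:nthLength}); any smooth cutoff bridging this uncertainty misclassifies order $N$ saddle connections, which after normalization by $N^{-2}$ and pairing gives an $O(1)$ error, not $O(N^{-\eta})$. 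Equivalently, when you integrate by parts in $s$ the derivative of $\mathbf{1}_{v,w \in \Xi(\g\omega_0;N)}$ is a sum of point masses at the (many) times when some saddle connection overtakes another under $\di^s$, and you have provided no mechanism to bound this.

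The paper does not smooth the indicator at all. Instead it proves (Theorem~\ref{thm:annularEstimate}) that for all but a set of $t \in [0,1]$ of measure $\ll N^{-\lambda}$, one has
\[
\bigl|\Xi(\di^s\ro^\theta\di^t\ro^\phi\omega_0;N) \,\symdiff\, \di^s\ro^\theta\,\Xi(\di^t\ro^\phi\omega_0;N)\bigr| \le N^{1-\zeta}
\]
uniformly in $(s,\theta) \in [0,S]\times[0,2\pi)$. This lets one replace the $s,\theta$-dependent index set by $\Xi(\di^t\ro^\phi\omega_0;N)$, after which the $s,\theta$ integral is a clean oscillatory integral over a \emph{fixed} set of pairs. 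Proving this annular estimate is the bulk of the work (all of Section~\ref{sec:annularEstimate}): one must show that few saddle connections of $\di^t\omega$ lie in the thin annulus $\ann(e^{-2S}\ell,e^{2S}\ell)$, and this is where Theorem~\ref{thm:secCount} is used first and most heavily, via a partition into sectors and a separation argument for well-spaced pairs. Your proposal invokes Theorem~\ref{thm:secCount} only for the near-parallel pairs in the oscillatory step, which is the easy application; the essential one, replacing the effective count of \cite{MR4031118}, is missing.
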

\begin{proof}[Proof of Theorem~\ref{thm:mainTheorem} assuming Theorem~\ref{thm:toBound}]
Fix $p \in \Z$ and $\tau > 0$.
By Theorem~\ref{thm:weylSum} it suffices to verify \eqref{eqn:criterionAgain} for $\mu$ almost-every $\g \in \D(1)$.
Let $\eta > 0$ and $N_0 \in \N$ and $C > 0$ be as in the hypothesis.
Fix $0 < \sigma < \eta$.
Whenever $\ceil{\tau^J} \ge N_0$ and $\sigma > 0$ we have
\[
\mu
\left( \left\{
\g \in \D(1)
:
\left|
\frac{1}{\ceil{\tau^J}}
\sum_{v \in \Xi(\g \omega_0;\ceil{\tau^J})}
\chi_p(\nbar \hol(v) \nbar)
\right|^2
\ge
\frac{1}{\ceil{\tau^J}^\sigma}
\right\} \right)
\le
\left( C_1 C \dfrac{\ceil{\tau^J}^\sigma}{\ceil{\tau^J}^\eta} \right)^{1/2}
\]
by applying Proposition~\ref{prop:new_diff} and then \eqref{eqn:toBound}.
The right-hand side is summable over $J \in \N$ and the Borel--Cantelli lemma finishes the proof.
\end{proof}

There are two major steps in the proof of Theorem~\ref{thm:toBound}.
We outline them here and carry out the details in Sections~\ref{sec:annularEstimate} and \ref{sec:controllingPairs} respectively.
The steps will be combined to prove Theorem~\ref{thm:mainTheorem} in Section~\ref{sec:proof}.

\paragraph{Step 1: Annular estimate.}
We wish to move the action $\di^s$ inside the summation appearing in \eqref{eqn:toBound}.
This is not straightforward because $\Xi(\di^s \ro^\theta \di^t \ro^\phi \omega_0; N)$ and $\di^s \Xi(\ro^\theta \di^t \ro^\phi \omega_0;N)$ need not agree.
Indeed, if for $\ro^\theta \di^t \ro^\phi \omega_0$ one knows $v_N$ is close to the horizontal and $v_{N + 1}$ is close to the vertical then it may be that $\di^s v_{N + 1}$ is shorter than $\di^s v_N$.
To get around this issue it would suffice to find $\zeta > 0$ such that
\[
\left|
\Xi(\di^s \ro^\theta \di^t \ro^\phi \omega_0;N)
\symdiff
\di^s \Xi(\ro^\theta \di^t \ro^\phi \omega_0;N)
\right|
\ll
N^{1 - \zeta}
\]
holds for all $s,\theta,t,\phi$.
One can prove such an estimate using the effective count~\cite{MR4031118} for the number of saddle connections of length at most $R$ as $R \to \infty$ but our goal is to avoid the use of spectral gap results.
As a replacement we will find constants $\zeta > 0$ and $\lambda > 0$ such that
\[
\mu \left(
\left\{
0 \le t \le 1
:
\begin{gathered}
|
\Xi(\di^s \ro^\theta \di^t \ro^\phi \omega_0;N)
\symdiff
\di^s \Xi(\ro^\theta \di^t \ro^\phi \omega_0;N)
|
>
N^{1 - \zeta}
\\
\textup{ for some }
(s,\theta) \in [0,S] \times [0,2\pi)
\end{gathered}
\right\}
\right)
\ll
\dfrac{1}{N^\lambda}
\]
holds for all $0 \le \phi < 2\pi$.
We will do so in Section~\ref{sec:annularEstimate} using Theorem~\ref{thm:secCount}.

Although $\Xi(\ro^\theta \di^t \ro^\phi \omega_0; N)$ and $\ro^\theta \Xi(\di^t \ro^\phi \omega_0; N)$ may also disagree as sets of saddle connections (because we have decided to order saddle connections of the same length by increasing angle) in this case the summations over the two sets agree because the summands only depend on the lengths of the saddle connections.
It is therefore no problem, upon moving $\di^s$ inside in \eqref{eqn:toBound}, to move the action $\ro^\theta$ inside as well.

The purpose of the annular estimate is to reduce the verification of \eqref{eqn:toBound} to the production of some $\eta > 0$ such that
\begin{equation}
\label{eqn:toBound2}
\int\limits_0^1
\frac{1}{2\pi} \int\limits_0^{2\pi}
\frac{1}{S} \int\limits_0^{S}
\frac{1}{N^2}
\sum_{v,w \in \Xi(\di^t \ro^\phi \omega_0;N)}
\chi_p(\nbar \di^s \ro^\theta \hol(v) \nbar)
\,
\overline{\chi_p(\nbar \di^s \ro^\theta \hol(w) \nbar)}
\intd s
\intd \theta
\intd t
\ll
\frac{1}{N^\eta}
\end{equation}
 for every $0 \le \phi < 2 \pi$.

\paragraph{Step 2: Controlling pairs.}
To produce $\eta > 0$ such that \eqref{eqn:toBound2} holds, we apply a linearization to arrive at the quantity
\[
\int\limits_0^1
\frac{1}{2\pi} \int\limits_0^{2\pi}
\frac{1}{S} \int\limits_0^{S}
\frac{1}{N^2}
\sum_{v,w \in \Xi(\di^t \ro^\phi \omega_0;N)}
\chi_p(s \alpha(\ro^\theta \hol(v)))
\,
\overline{\chi_p(s \alpha(\ro^\theta \hol(w)))}
\intd s
\intd \theta
\intd t
\]
which we need to control for every $0 \le \phi < 2 \pi$.
From the proof of \cite[Lemma~12]{MR4042164} it suffice to bound
\[
\nbar \hol(v) \nbar | \sin(2 \theta_{v,w}) |
\]
from below by a power of $N$.
Here $\theta_{v,w}$ is the angle between the holonomy vectors of the saddle connections $v$ and $w$.
This issue will be dealt with in Section~\ref{sec:controllingPairs}.

\section{Annular estimate}
\label{sec:annularEstimate}

In this section we will establish the following theorem.

\begin{theorem}
\label{thm:annularEstimate}
There are $\lambda > 0$ and $\zeta > 0$ and $N_1 > 0$ such that
\[
\haar
\left(
\left\{ 0 \le t \le 1 :
\begin{gathered}
| \Xi(\di^s \ro^\theta \di^t \ro^\phi \omega_0; N) \triangle \di^s \Xi(\ro^\theta \di^t \ro^\phi \omega_0; N)| > N^{1 - \zeta} \\
\textup{ for some } (s,\theta) \in [0,S] \times [0,2\pi)
\end{gathered}
\right\}
\right)
\ll
\frac{1}{N^\lambda} 
\]
holds for all $N \ge N_1$ and all $0 \le \phi < 2\pi$.
\end{theorem}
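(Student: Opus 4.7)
The strategy is to reduce the symmetric difference to a count in a thin annulus (uniformly in $s, \theta$), then bound this count for most $t$ via a Fubini--Markov argument exploiting Theorem~\ref{thm:secCount}.

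First, set $\omega^* = \di^t \ro^\phi \omega_0$ and $R = \ell(\omega^*; N)$. Since $\ro^\theta$ is an isometry, $\omega = \ro^\theta\omega^*$ shares its length data with $\omega^*$. Because $\di^s \ro^\theta$ scales every vector by a factor in $[e^{-S}, e^S]$, one has $\ell(\di^s \omega; N) \in [e^{-S}R, e^S R]$, and a case analysis (checking that saddle connections with $\|\hol_{\omega^*}(v)\| \leq e^{-2S}R$ are in both $\Xi$'s and those with $\|\hol_{\omega^*}(v)\| \geq e^{2S}R$ are in neither) yields
\[
\Xi(\di^s \omega; N) \symdiff \di^s \Xi(\omega; N) \subseteq \Lambda(\omega^*) \cap \ann(e^{-2S}R, e^{2S}R)
\]
uniformly in $(s, \theta) \in [0, S] \times [0, 2\pi)$. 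It therefore suffices to bound this annular count by $N^{1-\zeta}$ outside a set of $t$'s of measure $\ll N^{-\lambda}$.

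Using \eqref{eqn:quadraticBounds} to confine $R(t)$ to $[A\sqrt{N}, B\sqrt{N}]$, cover this range by a geometric grid $\{R_j\}_{j=1}^K$ of ratio $e^{2S}$, so that $K \ll N^\delta \log N$ and the annulus around $R(t)$ sits in $\ann(e^{-4S}R_{j(t)}, e^{4S}R_{j(t)})$ for a unique $j(t)$. Writing $\phi_j(t)$ for the $j$-th annular count on $\omega^*_t$ and $\rho_v(t) = \|\di^t\ro^\phi \hol_{\omega_0}(v)\|$, Fubini yields
\[
\int_0^1 \phi_j(t) \intd t = \sum_v \haar(T_{v,j}), \qquad T_{v,j} = \{t \in [0,1] : \rho_v(t) \in [e^{-4S}R_j, e^{4S}R_j]\}.
\]
With $(a',b') = \ro^\phi \hol_{\omega_0}(v)$ and $\lambda_v = \tfrac{1}{2}\log(|b'|/|a'|)$, the identity $\rho_v^2 = 2|a'b'|\cosh(2(t-\lambda_v))$ combined with the bound $|d\log\rho_v/dt| = |\tanh(2(t-\lambda_v))|$ yields $\haar(T_{v,j}) \ll \sqrt{S}$ in general and $\haar(T_{v,j}) \ll S$ whenever the minimum value $\sqrt{2|a'b'|}$ of $\rho_v$ is bounded away from $R_j$ by a constant factor.

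The ``non-critical'' saddle connections (for which the $O(S)$ bound applies) contribute a total of $\ll N \cdot S = N^{1-\delta}$ via \eqref{eqn:quadraticBounds}. The ``critical'' $v$'s lie in an $O(S)$-thin neighbourhood of the hyperbolas $|a'b'| = R_j^2/2$ in the $\hol_{\omega_0}$-plane; decomposing this neighbourhood into thin angular sectors and applying Theorem~\ref{thm:secCount} to $\omega_0$ (using the continuity of $C(\omega_0, \epsilon)$ in $\omega_0$ to secure the relevant scale hypothesis at $R \asymp \sqrt{N}$) bounds the critical count by $\ll N^{1-\delta}$, giving a critical contribution $\ll N^{1-\delta} \sqrt{S} = N^{1 - 3\delta/2}$. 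Altogether $\int \phi_j \intd t \ll N^{1-\delta}$ for each $j$. Markov's inequality inside each piece of the disjoint decomposition $B = \bigsqcup_j \{t: j(t) = j,\ \phi_j(t) > N^{1-\zeta}\}$ then gives $\haar(B) \ll N^{\zeta - \delta}$, and choosing $\zeta = \delta/2$ produces $\lambda = \delta/2$. The main obstacle is the critical-count estimate: teasing a genuine power saving out of Theorem~\ref{thm:secCount} for a thin neighbourhood of a hyperbolic curve (rather than a straight sector) requires a careful sector decomposition of the strip. The remaining steps are elementary hyperbolic geometry and measure-theoretic bookkeeping.
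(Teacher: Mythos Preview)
Your reduction of the symmetric difference to the annular count around $R(t)=\ell(\di^t\ro^\phi\omega_0;N)$ is correct and coincides with the paper's Lemmas~\ref{lem:geodesicCountControl1}--\ref{lem:geodesicCountControl2}. The gap is in the first-moment Markov step. After introducing the grid $\{R_j\}$ with $K\asymp N^\delta\log N$ points, your ``disjoint decomposition'' gives
\[
\haar(B)\;\le\; N^{\zeta-1}\sum_j\int_{\{j(t)=j\}}\phi_j(t)\intd t \;=\; N^{\zeta-1}\int_0^1\phi_{j(t)}(t)\intd t,
\]
but the right-hand integral is precisely the \emph{moving-annulus} first moment you brought in the grid to avoid, and the only bound available is $\int\phi_{j(t)}\le\sum_j\int_0^1\phi_j\ll K\cdot N^{1-\delta}\asymp N$; this yields $\haar(B)\ll N^\zeta$, which is no saving. (Dropping the disjointness and summing the individual Markov bounds over $j$ gives the same loss.) There is no way to pass from $\int\phi_j\ll N^{1-\delta}$ for each fixed $j$ to $\int\phi_{j(t)}\ll N^{1-\delta}$: the value $j(t)$ could conspire to pick out, at each $t$, the $j$ for which $\phi_j(t)$ is largest. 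A secondary issue is that your critical-count step---bounding saddle connections in an $O(S)$-neighbourhood of a hyperbola $|a'b'|=R_j^2/2$ via Theorem~\ref{thm:secCount}---is flagged but not executed; Theorem~\ref{thm:secCount} treats radial sectors, and a hyperbolic strip inside an annulus of bounded eccentricity is not thin in the angular variable.

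The paper's proof proceeds quite differently: it applies Markov to the \emph{second} moment $\int_0^1\big(\sum_v E_N(v;t)\big)^2\intd t$ and bounds it by $N^{2-\eta}$. The double sum over pairs $(v,w)$ is split according to the angular separation of $\hol(v)$ and $\hol(w)$ on the fixed surface $\ro^\phi\omega_0$. For close pairs (and pairs near the axes) Theorem~\ref{thm:secCount} bounds the number of $w$ in a thin sector around $v$ by $\ll N^{1-\alpha/4}$, giving a total contribution $\ll N^{2-\alpha/4}$. For far pairs one shows, via the monotonicity of $t\mapsto\|\di^t\hol(v)\|-\|\di^t\hol(w)\|$ and a lower bound on its derivative, that both $v$ and $w$ can lie in the moving thin annulus only for a time interval of length $\ll N^{-\varpi}$; summing over all pairs gives $\ll N^{2-\varpi}$. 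This pair-based decomposition is what allows one to exploit the thinness of the annulus without ever having to control the first moment of the moving annular count directly.
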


The proof of Theorem~\ref{thm:annularEstimate} will take up the remainder of this section.
Recall that $\delta > 0$ defines $S = N^{-\delta}$.
Below, all requirements that $N$ be large enough depend only on $\omega_0$ and not on $\omega$.
Throughout this section fix $0 \le \phi < 2 \pi$ and write $\omega = \ro^\phi \omega_0$.

We begin with two lemmas that will be used to relate
\[
| \Xi(\di^s \ro^\theta \di^t \omega; N) \triangle \di^s \Xi(\ro^\theta \di^t \omega; N)| > N^{1 - \zeta}
\]
with counts for saddle connections in certain annuli.

\begin{lemma}
\label{lem:geodesicCountControl1}
For every $\g \in \SL(2,\R)$ one has
\[
\Xi(\di^s \g \omega;N) \subset \di^s \Lambda(\g \omega;e^{2s} \ell(\g \omega;N))
\]
for every $s > 0$ and every $N \in \N$.
\end{lemma}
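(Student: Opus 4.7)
The plan is to use two elementary observations: the $\SL(2,\R)$ action takes saddle connections to saddle connections with $\hol(\g v) = \g \hol(v)$, and the operator norm bounds $e^{-s} \nbar u \nbar \le \nbar \di^s u \nbar \le e^s \nbar u \nbar$ for every $u \in \R^2$ (since $\di^s$ has singular values $e^s$ and $e^{-s}$). Together these give that lengths of saddle connections get distorted by at most a factor $e^s$ under the $\di^s$ action.

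First I would unpack what it means for a saddle connection $w$ of $\g \omega$ to satisfy $\di^s w \in \Xi(\di^s \g \omega; N)$: this is equivalent to $\nbar \hol(\di^s w) \nbar \le \ell(\di^s \g \omega; N)$, so to show $w \in \Lambda(\g \omega; e^{2s}\ell(\g \omega;N))$ it suffices to prove
\[
\ell(\di^s \g \omega; N) \le e^s \ell(\g \omega; N),
\]
because then $\nbar \hol(w) \nbar \le e^s \nbar \hol(\di^s w) \nbar \le e^s \cdot e^s \ell(\g \omega; N)$.

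The displayed inequality follows from counting: apply $\di^s$ to each of the $N$ saddle connections in $\Xi(\g \omega; N)$. Each has holonomy of length at most $\ell(\g \omega; N)$, so each image saddle connection on $\di^s \g \omega$ has length at most $e^s \ell(\g \omega; N)$. Thus $\di^s \g \omega$ has at least $N$ saddle connections of length at most $e^s \ell(\g \omega; N)$, which forces $\ell(\di^s \g \omega; N) \le e^s \ell(\g \omega; N)$.

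There is no real obstacle here; the argument is just the chain of two crude Lipschitz-type estimates for $\di^s$ applied once in each direction. The factor $e^{2s}$ (rather than $e^s$) is exactly what arises because once we pass from the $\di^s$-transformed surface back to $\g \omega$ we incur one factor of $e^s$ from the singular-value bound, and bounding $\ell(\di^s \g \omega; N)$ in terms of $\ell(\g \omega; N)$ incurs another. I would then conclude by translating the set-theoretic statement: every $v \in \Xi(\di^s \g \omega; N)$ is of the form $\di^s w$ for some $w \in \Lambda(\g \omega; e^{2s}\ell(\g \omega;N))$, which is exactly the desired inclusion.
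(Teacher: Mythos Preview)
Your argument is correct and is essentially the same as the paper's: both proofs rest on the observation that $\di^s \Xi(\g\omega;N)$ already furnishes $N$ saddle connections on $\di^s\g\omega$ of length at most $e^s\ell(\g\omega;N)$, so that $\ell(\di^s\g\omega;N)\le e^s\ell(\g\omega;N)$, and then one applies the singular-value bound once more to pull back. The paper phrases this as a contrapositive, but the content is identical. One small remark: your claim that ``$\di^s w \in \Xi(\di^s\g\omega;N)$ is \emph{equivalent} to $\nbar\hol(\di^s w)\nbar \le \ell(\di^s\g\omega;N)$'' is not quite right when several saddle connections share the length $\ell(\di^s\g\omega;N)$, but you only use the forward implication, which is valid.
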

\begin{proof}
If a saddle connection $v$ of $\g \omega$ has a length of more than $e^{2s} \ell(\g \omega;N)$ then $\di^s v$ has a length of more than $e^s \ell(\g \omega;N)$.
The saddle connection $\di^s v$ therefore cannot be amongst the first $N$ saddle connections of $\di^s \g \omega$ since $\di^s \Xi(\g \omega;N)$ has cardinality exactly $N$ and all of its members are saddle connections of $\di^s \g \omega$ of length at most $e^s \ell(\g \omega;N)$.
\end{proof}

\begin{lemma}
\label{lem:geodesicCountControl2}
For every $\g \in \SL(2,\R)$ one has
\[
\Xi(\di^s \g \omega; N)
\supset
\di^s \Lambda(\g \omega; e^{-2s} \ell(\g \omega;N))
\]
for every $s > 0$ and every $N \in \N$.
\end{lemma}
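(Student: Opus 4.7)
The plan is to mirror the proof of Lemma~\ref{lem:geodesicCountControl1}, simply replacing the eigenvalue estimate $\nbar \di^s u \nbar \ge e^{-s} \nbar u \nbar$ with its dual $\nbar \di^s u \nbar \le e^s \nbar u \nbar$. If a saddle connection $v$ of $\g\omega$ has length at most $e^{-2s}\ell(\g\omega;N)$, then as a saddle connection of $\di^s \g\omega$ it has length
\[
\nbar \di^s \hol(v) \nbar \le e^s \nbar \hol(v) \nbar \le e^{-s}\ell(\g\omega;N),
\]
so the goal is to conclude $\di^s v \in \Xi(\di^s \g\omega;N)$.

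The key step will be to show there are at most $N-1$ saddle connections of $\di^s\g\omega$ with length strictly less than $e^{-s}\ell(\g\omega;N)$. For any such saddle connection $u$, the same $|\di^{-s} \cdot| \le e^s |\cdot|$ estimate gives that $\di^{-s}u$ is a saddle connection of $\g\omega$ whose length is strictly less than $e^s \cdot e^{-s}\ell(\g\omega;N) = \ell(\g\omega;N)$. By definition of $\ell(\g\omega;N)$ there are at most $N-1$ saddle connections of $\g\omega$ with length strictly less than $\ell(\g\omega;N)$, and since $u \mapsto \di^{-s}u$ is injective the same bound transfers to $\di^s\g\omega$. Hence any saddle connection of $\di^s\g\omega$ with length strictly below $e^{-s}\ell(\g\omega;N)$ automatically lies in $\Xi(\di^s\g\omega;N)$.

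The main obstacle I anticipate is the borderline equality case $\nbar \di^s \hol(v) \nbar = e^{-s}\ell(\g\omega;N)$. This forces the chain of inequalities above to be saturated, which in turn forces $v$ to be horizontal in $\g\omega$ with $\nbar \hol(v) \nbar = e^{-2s}\ell(\g\omega;N)$ and forces $\ell(\di^s\g\omega;N)$ itself to equal $e^{-s}\ell(\g\omega;N)$. Here the pure length count no longer automatically places $\di^s v$ among the first $N$, and one has to inspect the angle-based tie-breaking rule built into the ordering of saddle connections. A short check using that convention (or a perturbation argument noting this is a non-generic configuration, irrelevant in the measure-theoretic application that follows) then completes the inclusion and finishes the lemma.
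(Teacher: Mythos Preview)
Your proof is correct and follows essentially the same approach as the paper's: both use the dilation bound $\nbar \di^s u\nbar \le e^s\nbar u\nbar$ to show that each $\di^s v$ in the set has length at most $e^{-s}\ell(\g\omega;N)$, and then argue that only saddle connections arising from $\Lambda(\g\omega;\ell(\g\omega;N))$ can possibly precede these in the ordering of $\di^s\g\omega$. The paper's proof simply asserts the conclusion at that point without discussing the equality boundary case you flag; your treatment is more scrupulous there, but the core argument is the same.
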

\begin{proof}
Every saddle connection in $\di^s \Lambda(\g \omega; e^{-2s} \ell(\g \omega;N))$ is a saddle connection of $\di^s \g \omega$ with length at most $e^{-s} \ell(\g \omega;N)$.
Moreover, no saddle connection of $\g \omega$ with length greater than $\ell(\g \omega;N)$ will, under the image of $\di^s$, be shorter than any saddle connection of $\di^s \Lambda(\g \omega;e^{-2s} \ell(\g \omega;N))$.
So all saddle connections in $\di^s \Lambda(\g \omega;e^{-2s} \ell(\g \omega;N))$ are amongst the first $N$ saddle connections of $\di^s \g \omega$.
\end{proof}

If the set
\begin{equation}
\label{eqn:ellipseFail}
\{ 0 \le t \le 1 : | \Xi(\di^s \ro^\theta \di^t \omega; N) \triangle \di^s \Xi(\ro^\theta \di^t \omega; N)| > N^{1 - \zeta} \textup{ for some } (s,\theta) \in [0,S] \times [0,2\pi) \}
\end{equation}
is empty for some $N \in \N$ then there is nothing to prove for that $N$.
Suppose that $t$ belongs to \eqref{eqn:ellipseFail} for some $N \in \N$.
We get $0 \le s \le S$ and $0 \le \theta < 2\pi$ depending on $t$ such that
\begin{equation}
\label{eqn:specificFailure}
\left|
\Xi(\di^s \ro^\theta \di^t \ro^\psi \omega; N) \symdiff \di^s \Xi(\ro^\theta \di^t \ro^\psi \omega; N)
\right|
\ge
N^{1 - \zeta}
\end{equation}
holds.
Given \eqref{eqn:specificFailure} we estimate
\begin{align*}
{}&
\left|
\Xi(\di^s \ro^\theta \di^t \omega; N) \symdiff \di^s \Xi(\ro^\theta \di^t \omega; N)
\right|
\\
\le
{}&
\left|
\di^s \Lambda(\ro^\theta \di^t \omega; e^{2s} \ell(\ro^\theta \di^t \omega; N)) \setminus \di^s \Xi(\ro^\theta \di^t \omega; N)
\right|
+
\left|
\di^s \Xi(\ro^\theta \di^t \omega; N) \setminus \di^s \Lambda(\ro^\theta \di^t \omega; e^{-2s} \ell(\ro^\theta \di^t \omega;N))
\right|
\\
\le
{}&
\left|
\Lambda(\ro^\theta \di^t \omega) \cap \ann(e^{-2S} \ell(\ro^\theta \di^t \omega; N) , e^{2S} \ell(\ro^\theta \di^t \omega; N))
\right|
\end{align*}
from Lemmas~\ref{lem:geodesicCountControl1} and \ref{lem:geodesicCountControl2}, where we have used $s \le S$ in deducing the last inequality.
Thus, whenever $t$ belongs to \eqref{eqn:ellipseFail} for some $N \in \N$, we have
\begin{align*}
N^{1 - \zeta}
&
\le
\left|
\Lambda(\ro^\theta \di^t \omega) \cap \ann(e^{-2S} \ell(\ro^\theta \di^t \omega; N) , e^{2S} \ell(\ro^\theta \di^t \omega; N))
\right|
\\
&
=
\left|
\Lambda(\di^t \omega) \cap \ann(e^{-2S} \ell(\di^t \omega; N) , e^{2S} \ell(\di^t \omega; N))
\right|
\end{align*}
because $\Lambda(\ro^\theta \di^t \omega) = \ro^\theta \Lambda(\di^t \omega)$ and $\ro^\theta$ does not change the length of the $N$th saddle connection.
We are led to consider the quantity
\begin{equation}
\label{eqn:annulusVisits}
E_N(v;t)
=
\begin{cases}
1 & \di^t v \in \ann(e^{-2S} \ell(\di^t \omega; N) , e^{2S} \ell(\di^t \omega; N)) \\
0 & \textup{otherwise}
\end{cases}
\end{equation}
defined for all $0 \le t \le 1$ and all $v \in \Lambda(\omega)$ and all $N \in \N$.
By its definition we therefore have
\[
N^{1-\zeta}
\le
\left|
\Lambda(\di^t \omega) \cap \ann(e^{-2S} \ell(\di^t \omega; N) , e^{2S} \ell(\di^t \omega; N))
\right|
\le
\sum_{v \in \Lambda(\omega)} E_N(v;t)
\]
whenever $t$ belongs to \eqref{eqn:ellipseFail}.
Thus
\[
\eqref{eqn:ellipseFail}
\subset
\left\{
0 \le t \le 1
:
\sum_{v \in \Lambda(\omega)} E_N(v;t)
\ge
N^{1 - \zeta}
\right\}
\]
which, together with Markov's inequality, gives
\begin{equation}
\label{eqn:desireAfterMarkov}
\haar(\eqref{eqn:ellipseFail})
\le
\frac{1}{N^{2(1-\zeta)}} \int\limits_0^1 \left| \sum_{v \in \Lambda(\omega)} E_N(v;t) \right|^2 \intd \haar(t)
\end{equation}
so our focus now is to bound
\begin{equation}
\label{eqn:afterMarkov}
\int\limits_0^1 \sum_{v \in \Lambda(\omega)} \sum_{w \in \Lambda(\omega)} E_N(v;t) E_N(w;t) \intd \haar(t)
\end{equation}
for $N$ large.
We begin with the following lemmas, which will allow us to restrict the sums in \eqref{eqn:afterMarkov} to thin annuli.

\begin{lemma}
\label{lem:nthLength}
There are constants $c_1,c_2 > 0$ such that for all $0 \le t \le 1$ we have
\[
\frac{1}{c_2} \frac{N^{0.5}}{\log N}
\le
\ell(\di^t \omega;N)
\le
\frac{1}{c_1} N^{0.5}
\]
for all $N \ge 2$.
\end{lemma}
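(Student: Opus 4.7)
The plan is to invert the Masur-type quadratic bounds of \eqref{eqn:quadraticBounds} after transferring them from $\omega_0$ to $\di^t \omega$, uniformly in $t \in [0,1]$. Since the rotation $\ro^\phi$ preserves Euclidean lengths, \eqref{eqn:quadraticBounds} passes verbatim from $\omega_0$ to $\omega = \ro^\phi \omega_0$. Next, for every $t \in [0,1]$ and every $u \in \R^2$ one has $e^{-1} \nbar u \nbar \le \nbar \di^t u \nbar \le e \nbar u \nbar$, so applying $\di^t$ to saddle connections of $\omega$, and $\di^{-t}$ to saddle connections of $\di^t \omega$, yields the sandwich
\[
|\Lambda(\omega; R/e)| \le |\Lambda(\di^t \omega; R)| \le |\Lambda(\omega; eR)|
\]
for every $R > 0$. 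Combining with \eqref{eqn:quadraticBounds} and absorbing the factors of $e$ into (renamed) constants $c_1, c_2$ delivers
\[
(c_1 R)^2 \le |\Lambda(\di^t \omega; R)| \le (c_2 R)^2
\]
uniformly in $t \in [0,1]$.

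The second step is to invert these counts. By the definition of $\ell$ we have $|\Lambda(\di^t \omega; \ell(\di^t \omega; N))| \ge N$, so the upper count bound gives $N \le (c_2 \ell(\di^t \omega; N))^2$, that is, $\ell(\di^t \omega; N) \ge N^{1/2}/c_2$; this is already stronger than the claimed lower bound $N^{1/2}/(c_2 \log N)$ for $N \ge 2$. Conversely, inserting $R = N^{1/2}/c_1$ into the lower count bound produces $|\Lambda(\di^t \omega; R)| \ge N$, forcing $\ell(\di^t \omega; N) \le N^{1/2}/c_1$. Both inequalities are uniform in $t \in [0,1]$ and in $\phi$, giving the lemma after one more relabelling of constants.

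No significant obstacle is expected here: the only care needed is to track the factor of $e$ produced by the length distortion of $\di^{\pm t}$ on $[0,1]$, which keeps the bounds uniform across the compact range of $t$. The $\log N$ appearing in the claimed lower bound is in fact slack relative to what this direct argument produces; it is presumably included as a buffer for later applications where a weaker form of the upper count bound (for example, one derived from Theorem~\ref{thm:secCount}, whose governing constant is merely continuous in $\omega$ and valid only for $R$ above an $\omega$-dependent threshold) would naturally introduce a mild logarithmic correction in $N$.
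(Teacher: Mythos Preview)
Your proof is correct and follows essentially the same route as the paper: transfer the quadratic bounds \eqref{eqn:quadraticBounds} from $\omega$ to $\di^t\omega$ via the ball inclusions $\mathsf{B}(0,R/e)\subset \di^{-t}\mathsf{B}(0,R)\subset \mathsf{B}(0,eR)$, then invert. Your inversion for the lower bound is in fact slightly sharper than the paper's (you obtain $\ell(\di^t\omega;N)\ge N^{1/2}/c_2$ directly, whereas the paper plugs in $R=N^{1/2}/(c_2\log N)$ and so carries the extra $\log N$), and you correctly note that the logarithm in the statement is slack.
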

\begin{proof}
First, note that
\[
|\Lambda(\di^t \omega) \cap \mathsf{B}(0,R)|
=
|\di^t \Lambda(\omega) \cap \mathsf{B}(0,R)|
=
|\Lambda(\omega) \cap \di^{-t} \mathsf{B}(0,R)|
\]
and $\mathsf{B}(0,\frac{1}{e}R) \subset \di^{-t} \mathsf{B}(0,R) \subset \mathsf{B}(0,eR)$ give from \eqref{eqn:quadraticBounds} the bounds $(c_1 R)^2 \le |\Lambda(\di^t \omega;R)| \le (c_2 R)^2$ for all $0 \le t \le 1$ and all $R > 0$.
Taking $R = N^{0.5}/c_1$ gives $N \le |\Lambda(\di^t \omega; N^{0.5}/c_1)|$ whence $\ell(\di^t \omega;N) \le N^{0.5}/c_1$.
Taking $R = N^{0.5}/c_2 \log N$ gives $N^{0.5}/ c_2\log N \le \ell(\di^t \omega;N)$.
\end{proof}

\begin{lemma}
\label{lem:bigAnnulus}
If $N^\delta \ge 2$ and $v \in \Lambda(\omega)$ is outside the annulus
\begin{equation}
\label{eqn:bigAnnulus}
\ann \left( \frac{1}{2ec_2} \frac{N^{0.5}}{\log N}, \frac{2e}{c_1} N^{0.5} \right)
\end{equation}
then $E_N(v;t) = 0$ for all $0 \le t \le 1$.
\end{lemma}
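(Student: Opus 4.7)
The idea is to unpack what $E_N(v;t)=1$ demands of $v$: by definition it forces $\nbar\di^tv\nbar$ to lie in a narrow annulus around $\ell(\di^t\omega;N)$, so the task is to transfer a constraint on $\nbar\di^tv\nbar$ into a constraint on $\nbar v\nbar$ using the elementary distortion bounds of $\di^t$, and then to apply the two-sided bound on $\ell(\di^t\omega;N)$ provided by Lemma~\ref{lem:nthLength}.

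For the transfer, observe that since $\di^t=\mathrm{diag}(e^t,e^{-t})$ one has $e^{-t}\nbar v\nbar\le\nbar\di^tv\nbar\le e^t\nbar v\nbar$ for every $v\in\R^2$; restricting to $t\in[0,1]$ yields $\nbar v\nbar/e\le\nbar\di^tv\nbar\le e\nbar v\nbar$. Combining this with $\nbar\di^tv\nbar\in[e^{-2S}\ell(\di^t\omega;N),e^{2S}\ell(\di^t\omega;N)]$ (from $E_N(v;t)=1$) and the inequalities $\tfrac{1}{c_2}\tfrac{N^{0.5}}{\log N}\le\ell(\di^t\omega;N)\le\tfrac{1}{c_1}N^{0.5}$ from Lemma~\ref{lem:nthLength}, one extracts the chain of inequalities
\[
\frac{e^{-1-2S}}{c_2}\,\frac{N^{0.5}}{\log N}\ \le\ \nbar v\nbar\ \le\ \frac{e^{1+2S}}{c_1}\,N^{0.5}.
\]

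To finish, I would invoke the hypothesis $N^\delta\ge 2$ to guarantee $S=N^{-\delta}\le 1/2$, so that the factors $e^{\pm 2S}$ are bounded by fixed constants of order one; after absorbing these into the multiplicative constants one recovers exactly the annulus $\ann(\tfrac{1}{2ec_2}\tfrac{N^{0.5}}{\log N},\tfrac{2e}{c_1}N^{0.5})$. The desired statement is then the contrapositive: if $\nbar v\nbar$ does not lie in that annulus, then $E_N(v;t)=0$ for every $t\in[0,1]$. I do not anticipate any real obstacle here; the argument is purely bookkeeping, with all the substance having already been done in Lemma~\ref{lem:nthLength}.
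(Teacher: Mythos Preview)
Your argument is correct and follows essentially the same route as the paper: prove the contrapositive by combining the definition of $E_N$, the bounds on $\ell(\di^t\omega;N)$ from Lemma~\ref{lem:nthLength}, the distortion estimate $e^{-1}\nbar v\nbar\le\nbar\di^t v\nbar\le e\nbar v\nbar$ for $t\in[0,1]$, and the hypothesis $N^\delta\ge 2$ to control $e^{2S}$. The paper merely separates the last two steps (first placing $\di^t v$ in an intermediate annulus, then pulling back to $v$), but the content is identical.
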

\begin{proof}
Fix $N \in \N$ with $N^\delta \ge 2$ and suppose $E_N(v;t) = 1$ for some $0 \le t \le 1$.
By Lemma~\ref{lem:nthLength} we have
\[
\frac{1}{e^{2S} c_2} \frac{N^{0.5}}{\log N} \le e^{-2S} \ell(\di^t \omega;N) \le e^{2S} \ell(\di^t \omega; N) \le \frac{e^{2S}}{c_1} N^{0.5}
\]
whence
\[
\di^t v \in \ann \left(\frac{1}{2 c_2} \frac{N^{0.5}}{\log N}, \frac{2}{c_1} N^{0.5} \right)
\]
because $e^{2S} \le 2$.
Therefore
\[
v \in \ann \left( \frac{1}{2ec_2} \frac{N^{0.5}}{\log N}, \frac{2e}{c_1} N^{0.5} \right)
\]
as $\di^t$ can lengthen vectors by a factor of at most $e$ and shorten them by a factor of at most $1/e$.
\end{proof}

Thus, for $N^\delta \ge 2$, only saddle connections of $\omega$ with holonomy inside \eqref{eqn:bigAnnulus} contribute to the summations in \eqref{eqn:afterMarkov}.
We continue by partitioning the annulus \eqref{eqn:bigAnnulus} into sectors as follows.
First define
\[
\overline{Z}
=
\sect(-\psi,\psi) \cup \sect(\tfrac{\pi}{2} - \psi, \tfrac{\pi}{2} + \psi) \cup \sect(\pi - \psi, \pi + \psi) \cup \sect(\tfrac{3\pi}{2} - \psi, \tfrac{3\pi}{2} + \psi)
\]
and
\[
Z = \overline{Z} \cap \Lambda \left( \omega; \frac{2e}{c_1} N^{0.5} \right)
\]
where $\psi = 2N^{-\gamma}$ for some $\gamma > 0$ yet to be determined.
(In fact $\gamma = \frac{1}{300}$ will suffice.)
Put
\begin{equation}
\label{eqn:kappa_def}
\kappa = \left\lfloor (\tfrac{\pi}{2} - 2 \psi) \ell(\omega;N)^\alpha \right\rfloor
\end{equation}
for some $\alpha > 0$ yet to be determined.
(In fact $\alpha = \frac{1}{100}$ will suffice.)
Decompose
\[
W
=
\Lambda(\omega) \cap \ann \left( \frac{1}{2ec_2} \frac{N^{0.5}}{\log N}, \frac{2e}{c_1} N^{0.5} \right)
\setminus
Z
\]
into $4\kappa$ subsets $W(1),\dots,W(4\kappa)$ each obtained by intersecting $W$ with annular arcs $\overline{W}(1),\dots,\overline{W}(4\kappa)$ of size $(\frac{\pi}{2} - 2\psi)/\kappa$.
Given $1 \le k \le 4\kappa$ let $\overline{V}(k)$ be the union of $\overline{W}(k)$, its reflections in the other quadrants, and any $\overline{W}(i)$ that are adjacent to any of these reflections.
(See Figure~\ref{fig:regions} for a schematic.)
Lastly, put $V(k) = \overline{V}(k) \cap \Lambda(\omega)$.
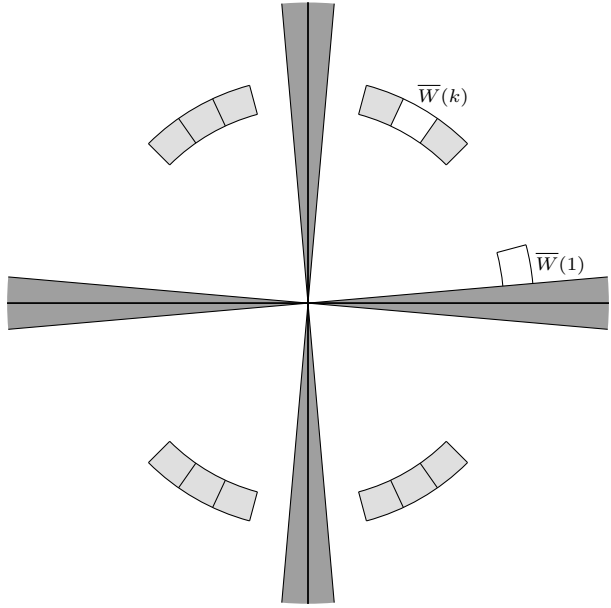
\begin{figure}[h]
\centering
\begin{tikzpicture}
\draw (-4,0) -- (4,0);
\draw (0,-4) -- (0,4);
\fill[gray!75] (0,0) -- (5:4) arc (5:-5:4) -- (0,0);
\fill[gray!75] (0,0) -- (85:4) arc (85:95:4) -- (0,0);
\fill[gray!75] (0,0) -- (175:4) arc (175:185:4) -- (0,0);
\fill[gray!75] (0,0) -- (265:4) arc (265:275:4) -- (0,0);
\draw (0,0) -- (5:4);
\draw (0,0) -- (85:4);
\draw (0,0) -- (95:4);
\draw (0,0) -- (175:4);
\draw (0,0) -- (185:4);
\draw (0,0) -- (265:4);
\draw (0,0) -- (275:4);
\draw (0,0) -- (355:4);
\fill[gray!25] (45:2.6) -- (45:3) arc (45:55:3) -- (55:2.6) arc (55:45:2.6);
\fill[gray!25] (65:2.6) -- (65:3) arc (65:75:3) -- (75:2.6) arc (75:65:2.6);
\draw (45:2.6) -- (45:3);
\draw (55:2.6) -- (55:3);
\draw (65:2.6) -- (65:3);
\draw (75:2.6) -- (75:3);
\draw (45:2.6) arc (45:75:2.6);
\draw (45:3) arc (45:75:3);
\node at (57:3.3) {\tiny{$\overline{W}(k)$}};
\draw (15:2.6) -- (15:3);
\draw (5:2.6) arc (5:15:2.6);
\draw (5:3) arc (5:15:3);
\node at (9:3.4) {\tiny{$\overline{W}(1)$}};
\begin{scope}[xscale=-1]
\fill[gray!25] (45:2.6) -- (45:3) arc (45:75:3) -- (75:2.6) arc (75:45:2.6);
\draw (-4,0) -- (4,0);
\draw (0,-4) -- (0,4);
\draw (45:2.6) -- (45:3);
\draw (55:2.6) -- (55:3);
\draw (65:2.6) -- (65:3);
\draw (75:2.6) -- (75:3);
\draw (45:2.6) arc (45:75:2.6);
\draw (45:3) arc (45:75:3);
\end{scope}
\begin{scope}[yscale=-1]
\fill[gray!25] (45:2.6) -- (45:3) arc (45:75:3) -- (75:2.6) arc (75:45:2.6);
\draw (-4,0) -- (4,0);
\draw (0,-4) -- (0,4);
\draw (45:2.6) -- (45:3);
\draw (55:2.6) -- (55:3);
\draw (65:2.6) -- (65:3);
\draw (75:2.6) -- (75:3);
\draw (45:2.6) arc (45:75:2.6);
\draw (45:3) arc (45:75:3);
\end{scope}
\begin{scope}[yscale=-1,xscale=-1]
\fill[gray!25] (45:2.6) -- (45:3) arc (45:75:3) -- (75:2.6) arc (75:45:2.6);
\draw (-4,0) -- (4,0);
\draw (0,-4) -- (0,4);
\draw (45:2.6) -- (45:3);
\draw (55:2.6) -- (55:3);
\draw (65:2.6) -- (65:3);
\draw (75:2.6) -- (75:3);
\draw (45:2.6) arc (45:75:2.6);
\draw (45:3) arc (45:75:3);
\end{scope}
\end{tikzpicture}
\caption{The annular sectors $\overline{W}(1)$ and $\overline{W}(k)$ for some $1 < k < \kappa$ are in white. The set $\overline{V}(k)$ consists of twelve annular sectors: the eleven light grey sectors together with $\overline{W}(k)$. The set $\overline{Z}$ is shown in dark grey. Note that $\overline{V}(1)$ only consists of eight regions as $\overline{Z}$ is not considered adjacent to any of our regions.}
\label{fig:regions}
\end{figure}

By Lemma~\ref{lem:bigAnnulus} the right-hand side of \eqref{eqn:afterMarkov} is bounded by the sum of the following expressions.
\begin{itemize}
\item
$\displaystyle{
\sum_{k=1}^{4\kappa}
\sum_{v \in W(k)}
\sum_{w \in V(k)}
\int\limits_0^1
E_N(v;t) E_N(w;t)
\intd t}$
\item
$\displaystyle{
\sum_{v \in Z}
\sum_{w \in \Lambda(\omega)}
\int\limits_0^1
E_N(v;t) E_N(w;t)
\intd t
+
\sum_{v \in \Lambda(\omega)}
\sum_{w \in Z}
\int\limits_0^1
E_N(v;t) E_N(w;t)
\intd t
}
$
\item
$
\displaystyle{
\sum_{k=1}^{4\kappa}
\sum_{v \in W(k)}
\sum_{\substack{w \in W\\ w \notin V(k)}}
\int\limits_0^1
E_N(v;t) E_N(w;t)
\intd t
}
$
\end{itemize}
and our goal now is to obtain power bounds for each of them.
This is carried out in the next two subsections: the first two expressions will be bounded via sectorial counts and the third via a separation argument.
Both make use of Theorem~\ref{thm:secCount}.

\subsection{Sectorial count}

Our goal here is to bound the sums
\begin{itemize}
\item
$\displaystyle{
\sum_{k=1}^{4\kappa}
\sum_{v \in W(k)}
\sum_{w \in V(k)}
\int\limits_0^1
E_N(v;t) E_N(w;t)
\intd t}$
\item
$\displaystyle{
\sum_{v \in Z}
\sum_{w \in \Lambda(\omega)}
\int\limits_0^1
E_N(v;t) E_N(w;t)
\intd t
}
$
\end{itemize}
by powers of $N$.

Let $C(\omega,\epsilon)$ be as in Theorem~\ref{thm:secCount}.
The parameter $\epsilon > 0$ will be determined later.
(In fact $\epsilon = \frac{1}{100}$ suffices.)
Fix $1 \le k \le 4 \kappa$.
With $I$ the appropriate sector of length $3(\frac{\pi}{2} - 2 \psi)/\kappa$ we have
\[
|\Lambda(\omega) \cap V(k)|
\le
4|\Lambda(\omega;\tfrac{2e}{c_1} N^{0.5}) \cap \sect(I)|
\le
4
\cdot
c_4
\cdot
\frac{3(\frac{\pi}{2} - 2\psi)}{\kappa}
\cdot
\left( \frac{2e}{c_1} N^{0.5} \right)^2
\]
whenever
\begin{equation}
\label{eqn:NlargeFromSector}
\frac{2e}{c_1} N^{0.5}
\ge
C(\omega,\epsilon)
\left( \frac{\kappa}{3(\frac{\pi}{2} - 2\psi)} \right)^{2+\epsilon}
\end{equation}
holds.

A priori, the occurrence of $\omega$ in \eqref{eqn:NlargeFromSector} means all subsequent statements requiring $N$ to be large enough will depend on $\omega$.
However, since $\epsilon > 0$ will be fixed and $\omega \mapsto C(\omega,\epsilon)$ is then continuous, the relation $\omega = \ro^\psi \omega_0$ implies that the apparent dependence on $\omega$ is in fact only a dependence on $\omega_0$.

Applying Lemma~\ref{lem:nthLength} and $\frac{1}{2} x \le \floor{x} \le x$ on $[1,\infty)$ to \eqref{eqn:kappa_def}, we bound $\kappa$ by
\[
\frac{\pi}{8 c_2^\alpha} \frac{N^{\frac{\alpha}{2}}}{(\log N)^\alpha}
\le
\frac{1}{2} \left( \frac{\pi}{2} - 2 \psi \right) \left( \frac{1}{c_2} \frac{N^{0.5}}{\log N} \right)^\alpha
\le
\kappa
\le
\left( \frac{\pi}{2} - 2 \psi \right) \left( \frac{1}{c_1} N^{0.5} \right)^\alpha
\le
\frac{\pi}{2 c_1^\alpha} N^{\frac{\alpha}{2}}
\]
whenever $N^\gamma \ge 16/\pi$.
Thus there is an absolute constant $c_6 > 0$ such that
\[
|\Lambda(\omega) \cap V(k)|
\le
4c_4 \cdot \frac{3 \pi}{2} \frac{8 c_2^\alpha}{\pi} \frac{(\log N)^\alpha}{N^\frac{\alpha}{2}} \cdot \frac{4e^2}{c_1^2} N
\le
c_6N^{1 - \frac{\alpha}{2}} (\log N)^{\alpha}
\le
c_6 N^{1 - \frac{\alpha}{4}}
\]
whenever
\[
N^{0.5} \ge \frac{c_1}{2e}
\cdot
\left( \frac{2}{3\pi} \right)^{2+\epsilon}
\cdot
\left( \frac{\pi}{8 c_2} \frac{N^{\frac{\alpha}{2}}}{(\log N)^\alpha} \right)^{2 + \epsilon}
\cdot
C(\omega,\epsilon)
\]
holds, which will be the case for $N$ large enough provided
\[
\alpha(2 + \epsilon) < 1
\tag{R}
\]
is in place.
We can therefore say, using Lemma~\ref{lem:bigAnnulus} and \eqref{eqn:quadraticBounds} to bound the first two sums, and bounding the integral by 1, that
\begin{equation}
\label{eqn:closeBound}
\sum_{k=1}^{4\kappa} \sum_{v \in W(k)} \sum_{w \in V(k)} \int\limits_0^1 E_N(v;t) E_N(w;t) \intd t
\ll
N^{1 - \frac{\alpha}{4}} \sum_{k=1}^{4\kappa} \sum_{v \in W(k)} \int\limits_0^1 E_N(v;t) \intd \haar(t) 
\ll
N^{2 - \frac{\alpha}{4}}
\end{equation}
holds whenever $N$ is large enough.

For the second sum we again apply Theorem~\ref{thm:secCount}, this time with $\epsilon' > 0$ to be determined ($\epsilon' = \frac{1}{100}$ suffices) and $I$ an appropriate arc of size $2\psi$ giving a constant $C(\omega,\epsilon')$ such that
\[
|\Lambda(\omega) \cap Z|
\le
4c_4 \cdot 4N^{-\gamma} \cdot \left( \frac{2e}{c_1} N^{0.5} \right)^2
\]
whenever
\[
\frac{2e}{c_1} N^{0.5}
\ge
\left( \frac{N^\gamma}{4} \right)^{2 + \epsilon'} C(\omega,\epsilon')
\]
holds.
Thus
\[
|\Lambda(\omega) \cap Z| \ll N^{1 - \gamma}
\]
holds provided
\[
\frac{1}{2} > \gamma(2 + \epsilon')
\tag{R}
\]
is the case.
Using Lemma~\ref{lem:bigAnnulus} and \eqref{eqn:quadraticBounds} to bound the second sum, we can say that
\begin{equation}
\label{eqn:axesBound}
\sum_{v \in Z}
\sum_{w \in \Lambda(\omega)}
\int\limits_0^1
E_N(v;t) E_N(w;t)
\intd t
\ll
N^{2 - \gamma}
\end{equation}
holds for $N$ large enough.

\subsection{Separation}

In this subsection we control the sum
\[
\sum_{k=1}^{4\kappa}
\sum_{v \in W(k)}
\sum_{\substack{w \in W \\ w \notin V(k)}}
\int\limits_0^1
E_N(v;t) E_N(w;t)
\intd t
\]
by a power of $N$.

Fix $1 \le k \le 4\kappa$ and fix saddle connections $v,w$ of $\omega$ with $v \in W(k)$ and $w \in W \setminus V(k)$.
Since $E_N(u;t)$ is unchanged when $u$ is reflected in either the horizontal or the vertical axis, we may assume that $v$ and $w$ are in the first quadrant.
Write $\theta_u = \arg(u)$.
We assume that $\theta_v > \theta_w$ as the alternative involves identical arguments.
The following properties are immediate consequences of that assumption, $v \in W(k)$ and $w \in W \setminus V(k)$.
\begin{enumerate}[label=\textbf{S\arabic*}.,ref=\textbf{S\arabic*}]
\item
\label{s:annulus}
$\dfrac{1}{2ec_2} \dfrac{N^{0.5}}{\log N} \le \nbar v \nbar, \nbar w \nbar \le \dfrac{2e}{c_1} N^{0.5}$
\item
\label{s:sectors}
$\dfrac{2}{N^\gamma} \le \theta_w < \theta_v \le \dfrac{\pi}{2} - \dfrac{2}{N^\gamma}$
\item
\label{s:separation}
$\theta_v - \theta_w > \dfrac{1}{\ell(\omega;N)^\alpha}$
\end{enumerate}
Indeed \ref{s:annulus} follows from Lemma~\ref{lem:bigAnnulus}, \ref{s:sectors} follows from the definitions of $\psi$ and $W$, and \ref{s:separation} follows from \eqref{eqn:kappa_def} and $N$ being large enough.

With these properties to hand, our goal in this subsection is bounding the Lebesgue measure of the set
\begin{equation}
\label{eqn:badTimes}
\{ 0 \le t \le 1 : E_N(v;t) E_N(w;t) = 1 \}
\end{equation}
by a negative power of $N$.

\begin{lemma}
\label{lem:maintainAngle}
For all $t > 0$ the angle between $\di^t v$ and $\di^t w$ is at least $\frac{1}{e^{2t}} (\theta_v - \theta_w)$.
\end{lemma}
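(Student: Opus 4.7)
The plan is to reduce everything to a one-variable calculus estimate for the angle function under the diagonal flow. Since the preceding paragraph allows us to assume that $v$ and $w$ lie in the first quadrant with $0 < \theta_w < \theta_v < \pi/2$, and since $\di^t$ preserves each open quadrant, the images $\di^t v$ and $\di^t w$ also lie in the first open quadrant, so the ``angle between'' them is simply $\arg(\di^t v) - \arg(\di^t w)$.

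I would next write the image angle explicitly. Since $\di^t$ acts by $(x,y) \mapsto (e^t x, e^{-t} y)$, a vector at angle $\theta$ in the first quadrant is sent to one at angle
\[
g(\theta) := \arctan\!\bigl(e^{-2t} \tan \theta\bigr),
\]
so that the angle between $\di^t v$ and $\di^t w$ equals $g(\theta_v) - g(\theta_w)$. It then suffices to show $g'(\theta) \ge e^{-2t}$ on $(0, \pi/2)$ and apply the mean value theorem.

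The derivative is
\[
g'(\theta) = \frac{e^{-2t} \sec^2 \theta}{1 + e^{-4t} \tan^2 \theta} = \frac{e^{-2t}}{\cos^2 \theta + e^{-4t} \sin^2 \theta}.
\]
Since $t > 0$ gives $e^{-4t} \le 1$, the denominator is bounded above by $\cos^2 \theta + \sin^2 \theta = 1$, so $g'(\theta) \ge e^{-2t}$ uniformly. The mean value theorem then yields
\[
g(\theta_v) - g(\theta_w) \ge e^{-2t}(\theta_v - \theta_w),
\]
which is the claimed bound.

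There is no real obstacle here: the entire content is the elementary inequality $\cos^2\theta + e^{-4t}\sin^2\theta \le 1$ for $t \ge 0$. The only small thing to be careful about is the reduction to the first quadrant, but this is already handled by the reflection symmetry of $E_N$ noted in the preceding paragraph.
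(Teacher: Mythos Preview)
Your proof is correct and essentially matches the paper's: both arguments bound the derivative of the angle map and invoke a mean value theorem. The only cosmetic difference is that the paper parameterizes by the slope $x=\tan\theta$ and uses the Cauchy mean value theorem to compare $\arctan(e^{-2t}x)$ with $\arctan(x)$, whereas you parameterize by the angle $\theta$ and use the ordinary mean value theorem for $g(\theta)=\arctan(e^{-2t}\tan\theta)$; since $g'(\theta)$ equals exactly the ratio $F'/G'$ appearing in the Cauchy formulation, the two computations are the same inequality in different coordinates.
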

\begin{proof}
The Cauchy mean value theorem gives
\[
\arctan \left( \frac{1}{e^{2t}} \frac{v_2}{v_1} \right)
-
\arctan \left( \frac{1}{e^{2t}} \frac{w_2}{w_1} \right)
\ge
\frac{1}{e^{2t}} 
\left(
\arctan \left(\frac{v_2}{v_1} \right)
-
\arctan \left(\frac{w_2}{w_1} \right)
\right)
=
\frac{1}{e^{2t}} (\theta_v - \theta_w)
\]
as desired.
\end{proof}

We frequently use the estimates
\[
\frac{1}{e^s} \nbar u \nbar \le \nbar \di^s u \nbar \le e^s \nbar u \nbar
\qquad \qquad
\theta_{\di^s u} \le \frac{1}{e^{2s}} \theta_u
\]
which hold for all $s \ge 0$ and all $u$ in the first quadrant.

If $E_N(v;t) E_N(w;t) = 0$ for all $0 \le t \le 1$ there is no need for a bound as \eqref{eqn:badTimes} will have zero measure.
We therefore assume also that there is a time $0 \le r \le 1$ at which $E_N(v;r) E_N(w;r) = 1$.
The definition of $E_N$ and Lemma~\ref{lem:nthLength} give
\begin{equation}
\label{eqn:badTimeDistance}
\Big| \nbar \di^r v \nbar - \nbar \di^r w \nbar \Big|
\le
2 \ell(\di^t \omega;N) \sinh(2 S)
\le
\frac{8}{c_1} N^{0.5 - \delta}
\end{equation}
as $\sinh(S) \sim S$ as $N \to \infty$.
Also \ref{s:separation} and Lemmas~\ref{lem:nthLength}, \ref{lem:maintainAngle} give
\begin{equation}
\label{eqn:badTimeAngle}
\theta_{\di^r v} - \theta_{\di^r w} \ge \frac{1}{e^{2r}} (\theta_v - \theta_w) \ge \frac{c_1^\alpha}{e^2} N^{-\frac{\alpha}{2}}
\end{equation}
and our first goal is to deduce from these that there is horizontal and vertical separation of $\di^r v$ from $\di^r w$.
Write $\chi$ for the angular separation $\theta_{\di^r v} - \theta_{\di^r w}$ and $Q = | \nbar \di^r v \nbar - \nbar \di^r w \nbar |$ for the difference in length.

\begin{figure}
\centering
\begin{tikzpicture}[scale=1.5]
\draw (0,3.6) arc (90:50:3.6) -- (50:3) arc (50:90:3) -- (0,3.6);
\fill[gray!25] (3.6,0) arc (0:30:3.6) -- (30:3) arc (30:0:3) -- (3,0);
\draw (-0.1,0) -- (4,0);
\draw (0,-0.1) -- (0,4);
\draw (3,0) arc (0:90:3);
\draw (3.6,0) arc (0:90:3.6);
\fill (40:3.25) circle (0.05);
\fill (28:3.1) circle (0.05);
\draw[dashed] (40:3.25) ++(0,-1.4) to (40:3.25);
\draw[dashed] (40:3.25) ++(0,1.4) to (40:3.25);
\draw[dashed] (40:3.25) ++(-1.4,0) to (40:3.25);
\draw[dashed] (40:3.25) ++(1.4,0) to (40:3.25);
\draw (0,0) to (50:4.4);
\draw (0,0) to (30:4.4);
\node[anchor=north west] at (40:3.25) {$\di^r v$};
\node[anchor=west] at (28:3.1) {$\di^r w$};
\end{tikzpicture}
\caption{If $\di^r w$ is separated in angle from $\di^r v$ as in \eqref{eqn:badTimeAngle} but $| \nbar \di^r v \nbar - \nbar \di^r w \nbar |$ is not too large as in \eqref{eqn:badTimeDistance} (so that $\di^r w$ belongs to the grey region) then we can say something about the horizontal and vertical separation of $\di^r v$ and $\di^r w$.}
\label{fig:gettingSeparation}
\end{figure}
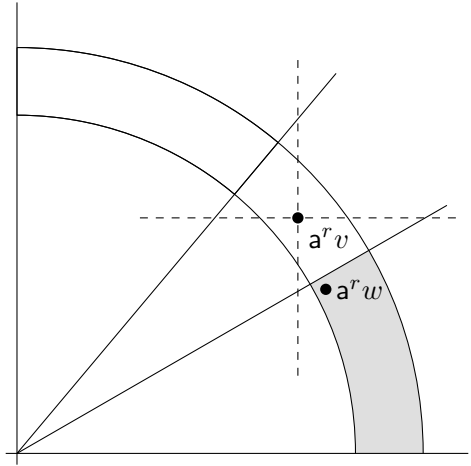

\begin{lemma}
\label{lem:separationIfClose}
There are constants $K > 0$ and $\xi > 0$ such that $(\di^r w)_1 - (\di^r v)_1 \ge K N^{0.5 - \xi}$ and $(\di^r v)_2 - (\di^r w)_2 \ge K N^{0.5 - \xi}$ for all $N$ large enough.
\end{lemma}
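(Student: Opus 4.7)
Set $A = \nbar \di^r v \nbar$, $B = \nbar \di^r w \nbar$, $\theta = \theta_{\di^r v}$ and $\phi = \theta_{\di^r w}$, so that $\theta > \phi > 0$, $|A - B| = Q \le (8/c_1) N^{0.5 - \delta}$ from \eqref{eqn:badTimeDistance}, and the angular gap $\chi = \theta - \phi \ge (c_1^\alpha/e^2) N^{-\alpha/2}$ from \eqref{eqn:badTimeAngle}. The strategy is to decompose each of $(\di^r w)_1 - (\di^r v)_1$ and $(\di^r v)_2 - (\di^r w)_2$ into a purely angular main term plus a length-difference error, and then show the main term dominates.

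The first step is to propagate \ref{s:sectors} through $\di^r$. Using $\tan(\theta_{\di^r u}) = e^{-2r} \tan(\theta_u)$ with $0 \le r \le 1$, the bounds $2/N^\gamma \le \theta_w$ and $\theta_v \le \pi/2 - 2/N^\gamma$ yield $\sin \phi \gg N^{-\gamma}$ and $\cos \theta \gg N^{-\gamma}$. Since $\sin$ and $\cos$ are both concave on $[0,\pi/2]$ and $(\theta+\phi)/2 \in [\phi, \theta]$, the midpoint values beat the endpoint minima, giving
\[
\sin\!\left(\tfrac{\theta+\phi}{2}\right) \ge \min(\sin\phi, \sin\theta) \gg N^{-\gamma},
\qquad
\cos\!\left(\tfrac{\theta+\phi}{2}\right) \ge \min(\cos\phi, \cos\theta) \gg N^{-\gamma}.
\]

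The second step is the decomposition
\[
(\di^r w)_1 - (\di^r v)_1 = A(\cos\phi - \cos\theta) - (A - B)\cos\phi
\]
combined with the product-to-sum identity $\cos\phi - \cos\theta = 2\sin((\theta+\phi)/2)\sin((\theta-\phi)/2)$. The angular bound above, $\sin(\chi/2) \gg N^{-\alpha/2}$, and $A \gg N^{0.5}/\log N$ from \ref{s:annulus} give
\[
A(\cos\phi - \cos\theta) \gg \frac{N^{0.5 - \gamma - \alpha/2}}{\log N},
\]
while $|(A - B)\cos\phi| \le |A - B| \le (8/c_1) N^{0.5 - \delta}$. An identical manipulation $(\di^r v)_2 - (\di^r w)_2 = A(\sin\theta - \sin\phi) + (A - B)\sin\phi$ together with $\sin\theta - \sin\phi = 2\cos((\theta+\phi)/2)\sin((\theta-\phi)/2)$ yields the same lower bound for the vertical main term, and the same length-error bound.

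The main obstacle is parameter balancing: for the angular main term to beat the length-difference error, one needs the additional requirement
\[
\delta > \gamma + \tfrac{\alpha}{2}
\tag{R}
\]
to be added to the ones already in force. This is painless, since the eventual choices $\alpha = 1/100$, $\gamma = 1/300$, $\delta = 1/3$ more than satisfy it. Taking any $\xi$ strictly between $\gamma + \alpha/2$ and $\delta$ then absorbs the $\log N$ factor and produces a constant $K > 0$ for which both claimed inequalities hold once $N$ is large enough.
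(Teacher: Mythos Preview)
Your proof is correct and follows the same overall plan as the paper: write each coordinate difference as an angular main term plus a length-difference error, then check that the main term beats the error. The execution differs in the trigonometry. The paper bounds $(\di^r w)_1$ from below by $(\nbar \di^r v\nbar - Q)\cos(\theta_{\di^r v} - \chi)$ and expands $\cos(\theta_{\di^r v} - \chi) - \cos(\theta_{\di^r v})$ via the addition formula, retaining first-order terms in $\chi$; for the vertical direction it instead bounds $\sin(\theta_{\di^r v}) - \sin(\theta_{\di^r v}-\chi)$ from below by $(\chi^2/8)(\theta_{\di^r v}/2)$, which is quadratic in $\chi$ and produces the weaker exponent $0.5-(\gamma+\alpha)$. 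Your use of the exact sum-to-product identities treats the two coordinates symmetrically and yields the sharper exponent $0.5-(\gamma+\alpha/2)$ in both cases, so your added requirement $\delta > \gamma + \alpha/2$ is strictly weaker than the paper's $\gamma+\alpha < \delta$ (both are comfortably met by the eventual parameter choices). One small remark: concavity is unnecessary for the midpoint bounds---since $\sin$ is increasing and $\cos$ is decreasing on $[0,\pi/2]$, the inequalities $\sin\!\big((\theta+\phi)/2\big)\ge\sin\phi$ and $\cos\!\big((\theta+\phi)/2\big)\ge\cos\theta$ follow directly from $(\theta+\phi)/2\in[\phi,\theta]$.
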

\begin{proof}
The coordinate $(\di^r w)_1$ cannot (cf.\ Figure~\ref{fig:gettingSeparation}) be smaller than $(\nbar \di^r v \nbar - Q) \cos(\theta_{\di^r v} - \chi)$ so from
\[
\cos(\theta_{\di^r v} - \chi) - \cos(\theta_{\di^r v})
=
\sin(\theta_{\di^r v}) \sin(\chi) - (1 - \cos(\chi)) \cos(\theta_{\di^r v})
\ge
\frac{\theta_{\di^r v}}{2} \frac{\chi}{2} - \frac{\chi^2}{2}
\]
we have by \eqref{eqn:badTimeDistance}, \eqref{eqn:badTimeAngle} and Lemma~\ref{lem:bigAnnulus} that
\begin{align*}
(\di^r w)_1 - (\di^r v)_1
&
\ge
\nbar \di^r v \nbar \frac{\chi}{2} \left( \frac{\theta_{\di^r v}}{2} - \chi \right) - Q
\\
&
\ge
\frac{1}{2e^2 c_2} \frac{N^{0.5}}{\log N}
\cdot
\frac{c_1^\alpha}{2e^2} N^{-\frac{\alpha}{2}}
\left(
\frac{1}{e^2} N^{-\gamma}
-
\frac{c_1^\alpha}{e^2} N^{-\frac{\alpha}{2}}
\right)
-
\frac{8}{c_1} N^{0.5 - \delta}
\end{align*}
because
\[
\dfrac{\theta_{\di^r v}}{2} - \chi
\ge
\frac{1}{2e^2} \theta_v - \chi
\ge
\frac{1}{e^2} \frac{1}{N^\gamma} - \frac{c_1^\alpha}{e^2} \frac{1}{N^{\frac{\alpha}{2}}}
>
0
\]
for $N$ large.
The above is contingent on the inequalities
\[
\gamma + \alpha < \delta
\qquad
\qquad
\gamma < \frac{\alpha}{2}
\tag{R}
\]
but provided they are satisfied we conclude there is a constant $K_1 > 0$ such that
\[
(\di^r w)_1 - (\di^r v)_1
\ge
\frac{K_1}{\log N} N^{0.5 - (\gamma + \alpha)}
\]
for all $N$ large enough.

Similarly, the largest $(\di^r w)_2$ can be is $(\nbar \di^r v \nbar + Q) \sin(\theta_{\di^r v} - \chi)$ so from
\[
\sin(\theta_{\di^r v}) - \sin(\theta_{\di^r v} - \chi)
=
(1 - \cos(\chi))\sin(\theta_{\di^r v}) + \cos(\theta_{\di^r v}) \sin(\chi)
\ge
\frac{\chi^2}{8} \frac{\theta_{\di^r v}}{2}
\]
we have
\begin{align*}
(\di^r v)_2 - (\di^r w)_2
&
\ge
\nbar \di^r v \nbar \frac{\chi^2}{8} \frac{\theta_{\di^r v}}{2} - Q
\\
&
\ge
\frac{1}{2e^2 c_2} \frac{N^{0.5}}{\log N}
\cdot
\frac{c_1^{2\alpha}}{8e^4} N^{-\alpha}
\cdot
\frac{1}{e^2} N^{-\gamma}
-
\frac{8}{c_1} N^{0.5 - \delta}
\end{align*}
as above.
Provided
\[
\gamma + \alpha < \delta
\tag{R}
\]
we conclude that there is a constant $K_2 > 0$ such that
\[
(\di^r v)_2 - (\di^r w)_2
\ge
\frac{K_2}{\log N} N^{0.5 - (\gamma + \alpha)}
\]
for all $N$ large enough.

To conclude take $K = \min \{ K_1, K_2 \}$ and $\xi = \gamma + 2 \alpha$ and $N_2 \ge \max \{ M_1, M_2 \}$ large enough.
\end{proof}

Define $f_{v,w}(t) = \nbar \di^t v \nbar - \nbar \di^t w \nbar$ for $t \in \R$.
Certainly $f_{v,w}$ is continuous.
Controlling the size of the set of those $0 \le t \le 1$ where $|f_{v,w}(t)|$ is small will be enough to bound the Lebesgue measure of \eqref{eqn:badTimes}.
From Lemma~\ref{lem:separationIfClose} and Facts~\ref{fac:uniqueZero}, \ref{fac:decreasing} in Appendix~\ref{app:calculus} we conclude that $f_{v,w}$ is decreasing and has a unique zero.
Since $f_{v,w}$ is continuous and decreasing, \eqref{eqn:badTimeDistance} furnishes $0 \le r_0 \le 1$ minimal with $|f_{v,w}(r_0)| \le \frac{8}{c_1} N^{0.5 - \delta}$.

\begin{lemma}
We have
\[
|f_{v,w}(r_0 + N^{-\varpi})| \ge \frac{8}{c_1} N^{0.5 - \delta}
\]
for some $\varpi > 0$ to be determined. (In fact, one can take $\varpi = \frac{1}{100}$.)
\end{lemma}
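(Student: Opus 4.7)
The plan is to argue by contradiction. Suppose $|f_{v,w}(r_0+N^{-\varpi})| < \tfrac{8}{c_1}N^{0.5-\delta}$. Since $f_{v,w}$ is continuous and decreasing with $|f_{v,w}(r_0)| \le \tfrac{8}{c_1}N^{0.5-\delta}$ by the definition of $r_0$, this assumption forces $|f_{v,w}(\tau)| \le \tfrac{8}{c_1}N^{0.5-\delta}$ for every $\tau \in [r_0,\, r_0+N^{-\varpi}]$. We will contradict this by producing a quantitative lower bound on $|f_{v,w}'|$ throughout the interval and integrating.

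A direct computation from $\nbar \di^\tau v \nbar^2 = e^{2\tau}v_1^2 + e^{-2\tau}v_2^2$ gives
\[
\frac{d}{d\tau} \nbar \di^\tau v \nbar = \nbar \di^\tau v \nbar \cos(2\theta_{v,\tau}),
\qquad \theta_{v,\tau} = \arg(\di^\tau v),
\]
and the identity $\cos(2\theta_v)-\cos(2\theta_w) = -2\sin(\theta_v+\theta_w)\sin(\theta_v-\theta_w)$ rearranges $f_{v,w}'$ as
\[
f_{v,w}'(\tau) = f_{v,w}(\tau)\cos(2\theta_{v,\tau}) - 2\nbar \di^\tau w \nbar \sin(\theta_{v,\tau}+\theta_{w,\tau})\sin(\theta_{v,\tau}-\theta_{w,\tau}).
\]
The second term is strictly negative, and its magnitude is bounded below by combining \ref{s:annulus}, \ref{s:sectors}, \ref{s:separation} with Lemma~\ref{lem:maintainAngle}: the bound $\nbar \di^\tau u\nbar \ge e^{-\tau}\nbar u\nbar$ and \ref{s:annulus} give $\nbar\di^\tau w\nbar \gtrsim N^{0.5}/\log N$; Lemma~\ref{lem:maintainAngle} with \ref{s:sectors} and the monotonicity of $\arg(\di^\tau u)$ in $\tau$ confines $\theta_{v,\tau},\theta_{w,\tau}$ to $[c N^{-\gamma},\, \pi/2 - c N^{-\gamma}]$, which yields $\sin(\theta_{v,\tau}+\theta_{w,\tau})\gtrsim N^{-\gamma}$; and Lemma~\ref{lem:maintainAngle} with \ref{s:separation} and Lemma~\ref{lem:nthLength} yields $\sin(\theta_{v,\tau}-\theta_{w,\tau})\gtrsim N^{-\alpha/2}$. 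Thus the magnitude of the second term is at least a constant multiple of $N^{0.5-\gamma-\alpha/2}/\log N$.

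Imposing the further requirement $\varpi + \gamma + \alpha/2 < \delta$, the first term's magnitude $|f_{v,w}(\tau)|\le\tfrac{8}{c_1}N^{0.5-\delta}$ is dominated by the second for $N$ large, so $|f_{v,w}'(\tau)|\gtrsim N^{0.5-\gamma-\alpha/2}/\log N$ throughout $[r_0,\, r_0+N^{-\varpi}]$. Integrating over this interval gives $f_{v,w}(r_0+N^{-\varpi}) - f_{v,w}(r_0) \lesssim -N^{0.5-\gamma-\alpha/2-\varpi}/\log N$, and under the requirement this excess strictly exceeds $\tfrac{16}{c_1}N^{0.5-\delta}$ in magnitude for $N$ large, forcing $f_{v,w}(r_0+N^{-\varpi}) < -\tfrac{8}{c_1}N^{0.5-\delta}$ and contradicting the starting hypothesis. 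With the eventual choices $\delta=\tfrac{1}{3}$, $\gamma=\tfrac{1}{300}$, $\alpha=\tfrac{1}{100}$, the value $\varpi=\tfrac{1}{100}$ satisfies the requirement with room to spare. The main technical point is verifying that Lemma~\ref{lem:maintainAngle} loses only a constant when transporting \ref{s:sectors} and \ref{s:separation} from time $0$ to time $\tau$, which works because $\tau$ is bounded above by an absolute constant.
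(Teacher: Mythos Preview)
Your proof is correct and follows essentially the same approach as the paper's: both argue by contradiction, decompose $f_{v,w}'(\tau)=\nbar \di^\tau v\nbar\cos(2\theta_{v,\tau})-\nbar \di^\tau w\nbar\cos(2\theta_{w,\tau})$ into a length-difference piece controlled by the assumed smallness of $|f_{v,w}|$ and an angle-difference piece bounded below using \ref{s:annulus}, \ref{s:sectors}, \ref{s:separation} together with Lemma~\ref{lem:maintainAngle}, and then integrate to force a contradiction. The only cosmetic difference is that you use the identity $\cos 2A-\cos 2B=-2\sin(A+B)\sin(A-B)$ directly whereas the paper applies the mean value theorem to $\cos$; this yields the marginally sharper requirement $\varpi+\gamma+\tfrac{\alpha}{2}<\delta$ in place of the paper's $\varpi+\gamma+\alpha<\delta$ (the paper absorbs the $\log N$ into an extra $N^{-\alpha/2}$), but the argument is otherwise identical.
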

\begin{proof}
Suppose that the contrary holds.
Then
\[
\left| \nbar \di^{r_0 + t} v \nbar - \nbar \di^{r_0 + t} w \nbar \right|
=
|f_{v,w}(r_0 + t)|
\le
\frac{8}{c_1} N^{0.5 - \delta}
\]
for all $0 \le t \le N^{-\varpi}$.
We also have
\[
\frac{|\cos(2 \theta_{\di^{r_0 + t} v}) - \cos(2 \theta_{\di^{r_0 + t} w})|}{|2 \theta_{\di^{r_0 + t} v} - 2 \theta_{\di^{r_0 + t} w}|}
=
\sin(2 \xi)
\ge
\min \{ \sin(2 \theta_{\di^{r_0 + t} w}), \sin(2 \theta_{\di^{r_0 + t} v}) \}
\]
for some $\theta_{\di^{r_0 + t} w} < \xi < \theta_{\di^{r_0 + t} v}$.
For $N$ large enough $r_0 + t$ is at most $2$.
Therefore both angles are at least $\frac{2}{e^4} N^{-\gamma}$ and at most $\frac{\pi}{2} - \frac{2}{e^4} N^{-\gamma}$.
This implies
\[
\left|
\cos(2 \theta_{\di^{r_0 + t} v}) - \cos(2 \theta_{\di^{r_0 + t} w})
\right|
\ge
\frac{2}{e^4} N^{-\gamma} \cdot 2(\theta_{\di^{r_0 + t} v} - \theta_{\di^{r_0 + t} w})
\ge
\frac{2}{e^4} N^{-\gamma}
\cdot
\frac{2c_1^\alpha}{e^4} N^{-\frac{\alpha}{2}}
\]
after an application of Lemma~\ref{lem:maintainAngle}, \ref{s:separation}, Lemma~\ref{lem:nthLength}, and the fact that $r_0 +t \le 2$ for $N$ large enough.

In combination, for all $0 \le t \le N^{-\varpi}$ we get
\[
\Big| \nbar \di^{r_0 + t} v \nbar \cos(2 \theta_{\di^{r_0 + t} v}) - \nbar \di^{r_0 + t} w \nbar \cos(2 \theta_{\di^{r_0 + t} v}) \Big|
\le
\frac{8}{c_1} N^{0.5 - \delta}
\]
and by \ref{s:annulus} together with dilation control
\[
\nbar \di^{r_0 + t} w \nbar \Big| \cos(2 \theta_{\di^{r_0 + t} v}) - \cos(2 \theta_{\di^{r_0 + t} w}) \Big|
\ge
\frac{1}{2 e^3 c_2} \frac{N^{0.5}}{\log N}
\cdot
\frac{4c_1^\alpha}{e^8} N^{-\gamma - \frac{\alpha}{2}}
\]
so that \eqref{eqn:firstDerivative} gives
\[
|f_{v,w}'(r_0 + t)| \gg N^{0.5 - \gamma - \alpha}
\]
for all $0 \le t \le N^{-\varpi}$ provided
\[
\gamma + \alpha < \delta
\tag{R}
\]
holds.
But then the mean value theorem implies
\[
|f_{v,w}(r_0 + N^{-\varpi}) - f_{v,w}(r_0)|
\gg
N^{0.5 - \gamma - \alpha - \varpi}
\]
which, if one has
\[
0.5 - \gamma - \alpha - \varpi > 0.5 - \delta
\tag{R}
\]
implies $|f_{v,w}(r_0 + N^{-\varpi})| > \frac{8}{c_1} N^{0.5 - \delta}$ for $N$ large enough, giving the desired contradiction.
\end{proof}

Since $f_{v,w}$ is strictly decreasing the containment
\[
\{ 0 \le t \le 1 : E_N(v;t) E_N(w;t) = 1 \}
\subset
\left\{ 0 \le t \le 1 : |f_{v,w}(t)| \le \frac{8}{c_1} N^{0.5 - \delta}  \right\}
\subset
[r_0, r_0 + N^{-\varpi}]
\]
allows us to conclude that
\begin{equation}
\label{eqn:separationBound}
\sum_{k=1}^{4\kappa}
\sum_{v \in W(k)}
\sum_{\substack{w \in W \\ w \notin V(k)}}
\int\limits_0^1
E_N(v;t) E_N(w;t)
\intd t
\ll
N^{2-\varpi}
\end{equation}
by trivially bounding the sums using \eqref{eqn:quadraticBounds}.

\subsection{Proof of Theorem~\ref{thm:annularEstimate}}

If -- as they may be -- the parameters $\alpha, \gamma, \delta, \epsilon, \epsilon', \zeta, \varpi$ are chosen such that all of the requirements (R) above are satisfied then there is $N_1$ so large that \eqref{eqn:desireAfterMarkov}, \eqref{eqn:closeBound}, \eqref{eqn:axesBound}, \eqref{eqn:separationBound} together give
\[
\haar(\eqref{eqn:ellipseFail})
\le
\frac{1}{N^{2(1-\zeta)}} \int\limits_0^1 \left| \sum_{v \in \Lambda(\omega)} E_N(v;t) \right|^2 \intd \haar(t)
\ll
\frac{1}{N^{2(1-\zeta)}} \left( N^{2 - \frac{\alpha}{4}} + N^{2-\gamma} + N^{2-\varpi} \right)
\] and all $0 \le \phi < 2 \pi$.
for all $N \ge N_1$.
If
\[
\zeta < \min \left\{ \tfrac{1}{8} \alpha, \tfrac{1}{2} \gamma, \tfrac{1}{2} \varpi \right\}
\tag{R}
\]
is satisfied then we can find  $\lambda > 0$ satisfying the hypothesis.

\section{Controlling pairs}
\label{sec:controllingPairs}

In this section we wish to establish \eqref{eqn:toBound2} for all $0 \le \phi < 2 \pi$.
Throughout this section, fix $0 \le \phi < 2\pi$ and write $\omega = \ro^\phi \omega_0$.
For the quantity
\begin{equation}
\label{eqn:beforeLinearization}
\int\limits_0^1
\frac{1}{2\pi} \int\limits_0^{2\pi}
\frac{1}{S} \int\limits_0^{S}
\frac{1}{N^2}
\sum_{v,w \in \Xi(\di^t \omega;N)}
\chi_p(\nbar \di^s \ro^\theta \hol(v) \nbar)
\,
\overline{\chi_p(\nbar \di^s \ro^\theta \hol(w) \nbar)}
\intd s
\intd \theta
\intd t
\end{equation}
we wish to produce $\eta > 0$ such that
\begin{equation}
\label{eqn:beforeLinearizationGoal}
\eqref{eqn:beforeLinearization} \ll \dfrac{1}{N^\eta}
\end{equation}
for all $N$ large enough.

Writing
\[
\alpha(u) = \frac{u_1^2 - u_2^2}{\nbar u \nbar}
\qquad
\qquad
\beta(u) = \frac{2u_1u_2}{\nbar u \nbar}
\]
for any non-zero $u \in \R^2$, we begin by aplying the following linearization.

\begin{lemma}
\label{lem:linearization}
Whenever $\nbar u \nbar \le N^{0.5}$ we have
\[
\Big| \nbar u \nbar + \alpha(u) s - \nbar \di^s u \nbar \Big|
\ll
N^{0.5 - 2 \delta}
\]
for all $0 \le s \le S$.
\end{lemma}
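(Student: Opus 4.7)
The approach is straightforward Taylor expansion: I will set $g(s) = \nbar \di^s u \nbar$, identify $\alpha(u)$ as $g'(0)$, and bound the Taylor remainder by a uniform estimate on $g''$.

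First, observe that $g(s)^2 = e^{2s} u_1^2 + e^{-2s} u_2^2$, so $g$ is smooth on $[0,S]$ (nonzero, since $u \ne 0$ if $\alpha(u)$ is defined). Differentiating once gives
\[
g(s) g'(s) = e^{2s} u_1^2 - e^{-2s} u_2^2,
\]
which at $s = 0$ reads $\nbar u \nbar g'(0) = u_1^2 - u_2^2$, so $g'(0) = \alpha(u)$. Hence $\nbar u \nbar + \alpha(u) s$ is precisely the first-order Taylor polynomial of $g$ at $s = 0$, and by Taylor's theorem with remainder,
\[
\Big| \nbar \di^s u \nbar - \nbar u \nbar - \alpha(u) s \Big|
\le
\tfrac{s^2}{2} \sup_{0 \le \xi \le s} |g''(\xi)|.
\]

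Next I would bound $|g''|$ uniformly. Differentiating $g(s) g'(s) = e^{2s} u_1^2 - e^{-2s} u_2^2$ once more yields
\[
g'(s)^2 + g(s) g''(s) = 2 e^{2s} u_1^2 + 2 e^{-2s} u_2^2 = 2 g(s)^2,
\]
so $g''(s) = 2 g(s) - g'(s)^2/g(s)$. The key observation is that $|g'(s)| \le g(s)$ (since $|e^{2s} u_1^2 - e^{-2s} u_2^2| \le e^{2s} u_1^2 + e^{-2s} u_2^2 = g(s)^2$), giving $|g''(s)| \le 3 g(s) \le 3 e^S \nbar u \nbar$ on $[0,S]$.

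Combining, for $0 \le s \le S = N^{-\delta}$ and $\nbar u \nbar \le N^{0.5}$,
\[
\Big| \nbar u \nbar + \alpha(u) s - \nbar \di^s u \nbar \Big|
\le
\tfrac{3 e^S}{2} \nbar u \nbar s^2
\le
\tfrac{3e}{2}\, N^{0.5} \cdot N^{-2 \delta}
\ll
N^{0.5 - 2 \delta},
\]
using $S \le 1$ for $N$ large. There is no real obstacle: the entire content is the two-line identity $g'' = 2g - (g')^2/g$ together with $|g'| \le g$, which makes the Taylor error bound explicit and linear in $\nbar u \nbar$.
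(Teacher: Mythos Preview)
Your proof is correct and follows essentially the same approach as the paper: both apply Taylor's theorem with Lagrange remainder to $s \mapsto \nbar \di^s u \nbar$ and bound the second derivative by a constant multiple of $\nbar \di^s u \nbar \le e^S \nbar u \nbar$. The paper quotes the second-derivative formula \eqref{eqn:secondDeriv} from Appendix~\ref{app:calculus} (namely $f_u'' = f_u + 4u_1^2 u_2^2 / f_u^3 = f_u(1 + \sin^2 2\theta)$), whereas you derive the equivalent identity $g'' = 2g - (g')^2/g$ directly and use $|g'| \le g$; the resulting bounds differ only in the numerical constant.
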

\begin{proof}
Using \eqref{eqn:secondDeriv} the bound
\[
\left| \frac{f_u''(s)}{2} s^2 \right|
<
42 s^2 \nbar u \nbar
\ll
N^{0.5 - 2 \delta}
\]
gives the desired result via the Lagrange form of the remainder in Taylor's theorem.
\end{proof}

The requirement
\[
\delta > 0.25
\tag{R}
\]
is needed for Lemma~\ref{lem:linearization} to be useful.
When $\delta > 0.25$ it suffices for \eqref{eqn:beforeLinearizationGoal} to produce $\eta > 0$ such that the quantity
\begin{equation}
\label{eqn:afterLinearization}
\int\limits_0^1
\frac{1}{2\pi} \int\limits_0^{2\pi}
\frac{1}{S} \int\limits_0^{S}
\frac{1}{N^2}
\sum_{v,w \in \Xi(\di^t \omega;N)}
\chi_p(\nbar \ro^\theta \hol(v) \nbar + s \alpha(\ro^\theta \hol(v)) )
\,
\overline{\chi_p(\nbar \ro^\theta \hol(w) \nbar + s \alpha(\ro^\theta \hol(w)))}
\intd s
\intd \theta
\intd t
\end{equation}
satisfies
\begin{equation}
\label{eqn:afterLinearizationGoal}
\eqref{eqn:afterLinearization} \ll \dfrac{1}{N^\eta}
\end{equation}
for all $N$ large enough.

The reduction to \eqref{eqn:afterLinearizationGoal} follows from Lipshitz continuity of $\chi_p$.
Indeed, that gives
\[
| \chi_p(\nbar \ro^\theta \hol(v) \nbar + s \alpha(\ro^\theta \hol(v)) ) - \chi_p( \nbar \di^s \ro^\theta \hol(v) \nbar) |
\ll
\dfrac{1}{N^{2 \delta - 0.5}}
\]
for all $N$ large enough, whence
\begin{equation}
\label{eqn:reductionToLinearization}
|\eqref{eqn:beforeLinearization} - \eqref{eqn:afterLinearization}| \ll \dfrac{1}{N^{2 \delta - 0.5}}
\end{equation}
for all $N$ large enough.

We now work towards \eqref{eqn:afterLinearizationGoal}.
Fixing $0 \le t \le 1$ and fixing a parameter $\nu > 0$ to be chosen later ($\nu = \frac{1}{100}$ suffices) we may restrict the summation to those saddle connections $v,w$ for which $\nbar \hol(v) \nbar,\nbar \hol(w) \nbar \ge N^{0.5 - \nu}$ both hold.

We may also discard those $w$ for which the angle between $v$ and $w$ is at most $N^{-\frac{\alpha}{2}}$ by an application of Theorem~\ref{thm:secCount} similar to the one in Section~\ref{sec:proof}; if $C''(\omega,\epsilon'')$ is the attendant constant ($\epsilon'' = \frac{1}{100}$ suffices) then the requirement
\[
\frac{1}{2} > \frac{\alpha}{2}(\epsilon'' + 2)
\tag{R}
\]
will allow us to discard as desired.

Fixing $v,w$ satisfying both of these criterion, the quantity
\[
\frac{1}{2\pi} \int\limits_0^{2\pi}
\frac{1}{S} \int\limits_0^{S}
\chi_p(\nbar \ro^\theta \hol(v) \nbar + s \alpha(\ro^\theta \hol(v)) )
\,
\overline{\chi_p(\nbar \ro^\theta \hol(w) \nbar + s \alpha(\ro^\theta \hol(w)))}
\intd s
\intd \theta
\]
in absolute value is equal to
\[
\left|
\frac{1}{2\pi} \int\limits_0^{2\pi}
\frac{1}{S} \int\limits_0^{S}
\chi_p(s \alpha(\ro^\theta \hol(v)) )
\,
\overline{\chi_p(s \alpha(\ro^\theta \hol(w)))}
\intd s
\intd \theta
\right|
\]
because rotations do not change the lengths of holonomy vectors.

Define $A(v,w) \ge 0$ by $A(v,w)^2 = (\alpha(\hol(v)) - \nbar \hol(w) \nbar)^2 + \beta(\hol(v))^2$.
To proceed we quote the following estimate from \cite{MR4042164}.

\begin{lemma}
\label{lem:CRestimateApplication}
We have
\[
\left|
\frac{1}{2\pi} \int\limits_0^{2\pi}
\frac{1}{S} \int\limits_0^{S}
\chi_p(s \alpha(\ro^\theta \hol(v)) )
\,
\overline{\chi_p(s \alpha(\ro^\theta \hol(w)))}
\intd s
\intd \theta
\right|
\le
\frac{4}{\pi N^{0.5}}
+
\frac{N^\delta}{\pi A(v,w)} \left( \log N + \log \frac{\pi}{4} \right)
\]
for all $N$ large enough.
\end{lemma}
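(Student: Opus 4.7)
The plan is to compute both integrals in closed form and then estimate carefully. First, since rotations preserve norms, the two $\chi_p$ factors combine into $\exp(2\pi i p s \Delta(\theta))$ where $\Delta(\theta) := \alpha(\ro^\theta \hol(v)) - \alpha(\ro^\theta \hol(w))$. Carrying out the $s$-integration gives
\[
\frac{1}{S}\int\limits_0^S e^{2\pi i p s \Delta(\theta)}\intd s = \frac{e^{2\pi i p S \Delta(\theta)} - 1}{2\pi i p S \Delta(\theta)},
\]
whose modulus is bounded by $\min\{1,\,1/(\pi |p| S |\Delta(\theta)|)\}$.

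The second step is to identify $\Delta(\theta)$ as a single sinusoid in $2\theta$. Using the identity $\alpha(\ro^\theta u) = \nbar u\nbar \cos(2\theta + 2\arg u)$ and a sum-to-product computation yields $\Delta(\theta) = A(v,w)\cos(2\theta + \phi_0)$ for some phase $\phi_0 = \phi_0(v,w)$; one checks that the amplitude squared matches the stated $A(v,w)^2$ (after absorbing $2\arg\hol(w)$ into the phase, so that $A(v,w)^2 = \nbar\hol(v)\nbar^2 + \nbar\hol(w)\nbar^2 - 2\nbar\hol(v)\nbar\nbar\hol(w)\nbar\cos(2\arg\hol(v)-2\arg\hol(w))$).

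The third step is to integrate this amplitude-controlled kernel over $\theta \in [0,2\pi)$. Substituting $\psi = 2\theta + \phi_0$ and using the four-fold symmetry of $|\cos\psi|$ reduces the task to estimating $\int_0^{\pi/2} \min\{1,\,1/(\pi|p|SA(v,w)\cos\psi)\}\intd\psi$ up to a constant factor. Split at the crossover threshold $\psi^*$ with $\cos\psi^* = 1/(\pi|p|SA(v,w))$: on $[\psi^*,\pi/2]$ the trivial bound $1$ integrates to $\pi/2 - \psi^* = \arcsin(1/(\pi|p|SA(v,w)))$, and on $[0,\psi^*]$ the standard integral $\int_0^{\psi^*}\sec\psi\intd\psi = \log|\sec\psi^* + \tan\psi^*|$ gives the logarithmic contribution.

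Finally, combine these pieces. From Lemma~\ref{lem:nthLength} we have $\nbar\hol(v)\nbar,\nbar\hol(w)\nbar \le \ell(\di^t\omega;N) \ll N^{0.5}$, so $A(v,w) \ll N^{0.5}$; applying $\arcsin x \le (\pi/2)x$ converts the trivial piece into the $4/(\pi N^{0.5})$ term (via $S = N^{-\delta}$ and careful tracking of the factor $4$ coming from the symmetries of $\cos$), while bounding $\sec\psi^* + \tan\psi^* \le 2\pi|p|SA(v,w)$ turns the logarithmic piece into $\log(2\pi|p|SA(v,w))/(\pi SA(v,w))$, which under $SA(v,w) \ll N^{0.5-\delta}$ collapses to the shape $(N^\delta/(\pi A(v,w)))(\log N + \log(\pi/4))$. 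I expect the main obstacle to be not the structure of the argument, which is standard stationary-phase-style bookkeeping, but pinning down the precise constants $4/\pi$ and $\log(\pi/4)$; since the lemma is explicitly quoted from \cite{MR4042164}, one can shortcut the constant-chasing by citing that reference directly.
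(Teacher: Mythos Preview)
Your sketch is exactly the computation behind \cite[Lemma~12]{MR4042164}, and the paper's proof is nothing more than a pointer to that lemma (with $R=N^{0.5}$, $\epsilon=1$), so the approaches coincide. One small correction on the constants: splitting at the natural crossover $\cos\psi^*=1/(\pi|p|SA(v,w))$ bounds the trivial piece by $\sim N^\delta/(\pi A(v,w))$ rather than by $4/(\pi N^{0.5})$; the stated first term arises from splitting instead at the fixed scale $\cos\psi_0\sim N^{-0.5}$, which is the role of the parameter $R$ in the cited lemma. Your closing suggestion---cite \cite{MR4042164} directly for the exact constants---is precisely what the paper does. (Incidentally, the amplitude you derive, involving $\cos(2\theta_v-2\theta_w)$, is the correct one and is what the paper actually uses immediately after the lemma; the printed definition $(\alpha(\hol(v))-\nbar\hol(w)\nbar)^2+\beta(\hol(v))^2$ appears to be a typo.)
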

\begin{proof}
The estimate follows by duplicating (with $R = N^{0.5}$ and $\epsilon = 1$) the proof of \cite[Lemma~12]{MR4042164} up to \cite[Equation~(24)]{MR4042164} and the estimates immediately after \cite[Equation~(24)]{MR4042164}.
That much of the proof does not use the hypothesis.
\end{proof}

Our assumptions on $\| \hol(v) \|$, $\| \hol(w) \|$ and the angle between $v$ and $w$ give
\[
A(v,w) \ge \nbar \hol(v) \nbar | \sin(2(\theta_v - \theta_w)) | \ge N^{0.5 - \nu} \cdot c_1^\alpha N^{-\frac{\alpha}{2}}
\]
so that overall
\[
\left|
\frac{1}{2\pi} \int\limits_0^{2\pi}
\frac{1}{S} \int\limits_0^{S}
\chi_p(s \alpha(\ro^\theta \hol(v)) )
\,
\overline{\chi_p(s \alpha(\ro^\theta \hol(w)))}
\intd s
\intd \theta
\right|
\ll
N^{-0.5} + N^{\delta + \nu + \frac{\alpha}{2} - 0.5} \log N
\]
for our vectors $v,w$.
This is satisfactory provided
\[
\delta + \nu + \frac{\alpha}{2} < 0.5
\tag{R}
\]
holds, as we may then take $\eta = \delta + \nu + \frac{\alpha}{2} - 0.5$ to establish \eqref{eqn:afterLinearizationGoal}.

\section{Proof of main theorem}
\label{sec:proof}

In Section~\ref{sec:beginningProof} we reduced (via Theorem~\ref{thm:toBound}) the proof of Theorem~\ref{thm:mainTheorem} to the statement that
\begin{equation}
\label{eqn:toBound3}
\dfrac{1}{2\pi}
\int\limits_0^{2\pi}
\int\limits_0^1
\frac{1}{2\pi} \int\limits_0^{2\pi}
\frac{1}{S} \int\limits_0^{S}
\frac{1}{N^2}
\sum_{v,w \in \Xi(\di^s \ro^\theta \di^t \ro^\phi \omega_0;N)}
\chi_p(\nbar \hol(v) \nbar)
\,
\overline{\chi_p(\nbar \hol(w) \nbar)}
\intd s
\intd \theta
\intd t
\ll
\frac{1}{N^\eta}
\end{equation}
for some $\eta > 0$.
We finish here the proof of Theorem~\ref{thm:mainTheorem} by establishing \eqref{eqn:toBound3}.

\begin{lemma}
In order to prove \eqref{eqn:toBound3} it suffices to prove
\begin{equation}
\label{eqn:toBound4}
\int\limits_0^1
\frac{1}{2\pi} \int\limits_0^{2\pi}
\frac{1}{S} \int\limits_0^{S}
\frac{1}{N^2}
\sum_{v,w \in \Xi(\di^t \omega;N)}
\chi_p(\nbar \hol(\di^s \ro^\theta v) \nbar)
\,
\overline{\chi_p(\nbar \hol(\di^s \ro^\theta w) \nbar)}
\intd s
\intd \theta
\intd t
\ll
\frac{1}{N^\eta}
\end{equation}
for some $\eta > 0$.
\end{lemma}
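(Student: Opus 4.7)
The plan is to compare \eqref{eqn:toBound3} with \eqref{eqn:toBound4} term-by-term, using Theorem~\ref{thm:annularEstimate} to absorb the resulting discrepancy into a power-savings error. Writing $\omega = \ro^\phi \omega_0$, the expressions differ in three ways: (i) the summation in \eqref{eqn:toBound3} is over $\Xi(\di^s \ro^\theta \di^t \ro^\phi \omega_0; N)$ while in \eqref{eqn:toBound4} it is over $\Xi(\di^t \omega; N)$; (ii) in \eqref{eqn:toBound4} the action $\di^s \ro^\theta$ is applied to the saddle connection before taking its length; and (iii) \eqref{eqn:toBound3} integrates over $\phi$ while \eqref{eqn:toBound4} treats $\phi$ as fixed via $\omega$.

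First I would fix $\phi$ and apply Theorem~\ref{thm:annularEstimate} to replace $\Xi(\di^s \ro^\theta \di^t \omega; N)$ by $\di^s \Xi(\ro^\theta \di^t \omega; N)$ inside the sum. Outside a set of $t \in [0,1]$ of Lebesgue measure $\ll N^{-\lambda}$, these two sets have symmetric difference of size at most $N^{1-\zeta}$ for all $(s,\theta) \in [0,S] \times [0,2\pi)$. Using the elementary bound $|A \times A \symdiff B \times B| \le 2 \max\{|A|,|B|\} \cdot |A \symdiff B|$ together with $|\Xi(\cdot;N)| = N$, the double sum changes by at most $\tfrac{1}{N^2} \cdot 2N \cdot N^{1-\zeta} \ll N^{-\zeta}$ on the good set of $t$; on the exceptional set the integrand is trivially bounded by $1$, contributing $\ll N^{-\lambda}$.

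Next I would observe that $\Xi(\ro^\theta \di^t \omega; N)$ and $\ro^\theta \Xi(\di^t \omega; N)$ can differ only in how they order saddle connections of equal length, and since the summand $\chi_p(\nbar \hol(\cdot) \nbar)$ depends only on length the sum over either set is identical. This converts the inner summation into one over $\di^s \ro^\theta \Xi(\di^t \omega; N)$. Reparametrizing by $u = \di^s \ro^\theta v$ with $v \in \Xi(\di^t \omega; N)$ and using the equivariance $\hol(\di^s \ro^\theta v) = \di^s \ro^\theta \hol(v)$, the summand becomes $\chi_p(\nbar \hol(\di^s \ro^\theta v) \nbar)$, so that at this stage the whole expression agrees with the integrand of \eqref{eqn:toBound4}.

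Finally I would integrate against $\tfrac{1}{2\pi} \intd \phi$. Since the bound \eqref{eqn:toBound4} and the two error contributions above are all uniform in $\phi$, this yields \eqref{eqn:toBound3} with $\eta$ replaced by any positive number below $\min\{\eta, \zeta, \lambda\}$. The only genuinely delicate step is the first one, and its required quantitative control of the symmetric difference is precisely the output of Section~\ref{sec:annularEstimate}; the rest of the argument is a clean bookkeeping of equivariance and length-preservation by $\ro^\theta$.
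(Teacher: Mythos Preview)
Your proposal is correct and follows essentially the same route as the paper: invoke Theorem~\ref{thm:annularEstimate} to split $t \in [0,1]$ into a bad set of measure $\ll N^{-\lambda}$ and a good set on which the symmetric difference is at most $N^{1-\zeta}$, then pass $\ro^\theta$ inside for free since the summand depends only on lengths. Your write-up is in fact more explicit than the paper's, which compresses the argument to a single line (``It follows that $N^{-\zeta} + N^{-\lambda}$ controls the difference'') after having explained the $\ro^\theta$ step earlier in Section~\ref{sec:beginningProof}.
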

\begin{proof}
Let $\Omega$ be the set \eqref{eqn:ellipseFail}.
The conclusion of Theorem~\ref{thm:annularEstimate} is that $\haar(\Omega) \ll N^{-\lambda}$.
When $t$ does not belong to $\Omega$ we have
\[
\left| \Xi(\di^s \ro^\theta \di^t \omega; N) \symdiff \di^s \Xi(\ro^\theta \di^t \omega;N) \right|
\le
N^{1 - \zeta}
\]
for all $ 0 \le s \le S$ and all $0 \le \theta < 2\pi$.
It follows that $N^{-\zeta} + N^{-\lambda}$ controls the difference between \eqref{eqn:toBound3} and \eqref{eqn:toBound4}.
\end{proof}

To apply the material of Section~\ref{sec:annularEstimate} -- which establishes Theorem~\ref{thm:annularEstimate} -- and the material of Section~\ref{sec:controllingPairs} -- which establishes \eqref{eqn:toBound4} -- we need to ensure that the requirements (R) above can be satisfied simultaneously. The choices
\[
\delta = \frac{1}{3}
\qquad
\alpha = \epsilon = \epsilon' = \epsilon'' = \nu = \varpi = \frac{1}{100}
\qquad
\gamma = \frac{1}{300}
\qquad
\zeta = \frac{1}{1000}
\]
show that this is possible, concluding the proof of Theorem~\ref{thm:mainTheorem}.

\appendix
\renewcommand{\thesection}{\Alph{section}} 

\section{Proof of Theorem \ref{thm:secCount} on counting in sectors by Benjamin Dozier}
\label{app:benProof}

Theorem~\ref{thm:secCount} is a more explicit version of \cite[Theorem 1.8]{MR3959357}.
Getting the explicit version is a matter of keeping careful track of the constants in various proofs in that paper.
The first step is to give explicit constants in \cite[Proposition 2.1, p.94]{MR3959357}.

\begin{proposition}
\label{prop:constants_improved}
Fix $\mathcal{H}$ and $0 < \delta < 1/2$.
Define $\alpha : \mathcal{H} \to \R$ by $\alpha(\omega) = 1/\ell(\omega)^{1+\delta}$.
There is a constant $b$ such that for any interval $I \subset \sphere^1$ there is a constant $c_I = O\left( \frac{1}{|I|^{1-2\delta}}\right)$ such that for any $\omega \in \mathcal{H}$
\[
\int\limits_I \dfrac{1}{\ell(\di^t \ro^\theta \omega)^{1+\delta}} \intd \theta
<
c_I \cdot e^{-(1-2\delta)T} \alpha(\omega) + b \cdot |I|
\]
for all $T \ge 0$.
\end{proposition}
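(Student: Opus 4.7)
The statement is a quantitative refinement of \cite[Proposition 2.1]{MR3959357} (the analogous estimate with $I = \sphere^1$), so the plan is to recapitulate its proof while carefully tracking how each constant depends on the arc $I$.

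First, reduce $\alpha$ to a Margulis-type function. Standard cusp-structure estimates provide a stratum constant $\epsilon_0 = \epsilon_0(\mathcal{H}) > 0$ such that
\[
\hat\alpha(\omega) = \sum_{v \in \Lambda(\omega),\,\nbar v \nbar \le \epsilon_0} \nbar v \nbar^{-(1+\delta)}
\]
satisfies $\alpha \le C\hat\alpha + C$ on $\mathcal{H}$, so it suffices to establish the bound with $\hat\alpha$ in place of $\alpha$. Exchanging sum and integral via Fubini yields
\[
\int_I \hat\alpha(\di^T \ro^\theta \omega) \intd \theta = \sum_{v \in \Lambda(\omega)} \int_I \frac{\mathbf{1}_{\nbar \di^T \ro^\theta v \nbar \le \epsilon_0}}{\nbar \di^T \ro^\theta v \nbar^{1+\delta}} \intd \theta,
\]
where only saddle connections with $\nbar v \nbar \le \epsilon_0 e^T$ contribute.

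For each such $v$, substituting $\psi = \theta + \arg v$ and rescaling $u = e^{2T}(\psi - \pi/2)$ near each peak $\psi = \pm \pi/2$ converts the inner integral into
\[
\frac{e^{(\delta - 1)T}}{\nbar v \nbar^{1+\delta}} \int_{A_v} (u^2 + 1)^{-(1+\delta)/2} \intd u,
\]
where $A_v \subseteq \R$ is an interval of length at most $\min(|I| e^{2T},\, 2\epsilon_0 e^T / \nbar v \nbar)$, and is nonempty only when $\arg v$ lies in an angular window of total measure $O(|I|)$ around the vertical directions shifted by the center of $I$. Split the saddle connections by whether (a) $\nbar v \nbar \le \epsilon_0$ or (b) $\epsilon_0 < \nbar v \nbar \le \epsilon_0 e^T$. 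For group (a), each summand is bounded by $C_\delta \min(1, |I| e^{2T}) e^{(\delta - 1)T} \nbar v \nbar^{-(1+\delta)}$, so the total is at most $C_\delta \min(1, |I| e^{2T}) e^{(\delta-1)T} \hat\alpha(\omega)$; case-splitting on whether $|I| e^{2T}$ is above or below $1$ shows this is bounded by $c_I \cdot e^{-(1-2\delta)T} \hat\alpha(\omega)$ with $c_I = O(|I|^{-(1-2\delta)})$, supplying the first term of the claim.

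For group (b), a dyadic decomposition in $\nbar v \nbar$ combined with the global quadratic count $|\Lambda(\omega; R)| \le C_2 R^2$ and the length bound on $A_v$ produces a total bounded by $b \cdot |I|$ with $b = b(\mathcal{H})$; the hypothesis $\delta < 1/2$ is used to ensure the resulting geometric series in the shell parameter converges uniformly in $T$. The main obstacle is extracting the $|I|$ factor in the group (b) contribution without invoking a sector count (which would be circular, since this proposition is precisely the input used to derive Theorem~\ref{thm:secCount}). This is handled by carefully interpolating between the two terms in the $\min$-bound on $|A_v|$ (the bound $|I| e^{2T}$ dominates for moderate $v$ while $2 \epsilon_0 e^T / \nbar v \nbar$ dominates for longer $v$), with the crossover happening at $\nbar v \nbar \sim 2 \epsilon_0/(|I| e^T)$, and summing the resulting estimates against the global quadratic count over dyadic shells.
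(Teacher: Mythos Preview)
Your direct approach via the Margulis-type sum $\hat\alpha$ and the explicit change of variables is natural, and the group-(a) analysis is essentially correct (indeed it yields the sharper decay $e^{-(1-\delta)T}$, which is more than enough). The gap is in group (b). You invoke a ``global quadratic count $|\Lambda(\omega;R)|\le C_2 R^2$'' and claim a contribution $b\cdot|I|$ with $b$ depending only on the stratum, but no such uniform $C_2$ exists over $\mathcal{H}$: a unit-area surface containing a cylinder of circumference $w$ and height $\approx 1$ carries on the order of $1/w$ saddle connections of length at most $2$ (those crossing the cylinder and wrapping up to $\sim 1/w$ times), so $|\Lambda(\omega;2)|\to\infty$ as $w\to 0$. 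You rightly flagged the circularity of using a \emph{sector} count here, but the same circularity afflicts any uniform \emph{global} count---the proposition is precisely the input from which such uniform bounds are derived. In the thin-cylinder example the group-(b) contribution is of order $e^{(\delta-1)T}/w$, which no stratum constant $b$ can bound; it \emph{can} be absorbed into the first term $c_I\,e^{-(1-2\delta)T}\alpha(\omega)$ since $\alpha(\omega)\gtrsim w^{-(1+\delta)}$, but doing this in general requires relating the long saddle connections in group (b) to the short ones that ``cause'' them, and that is exactly what the complexity hierarchy of \cite{MR3959357} encodes.

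The paper's proof accordingly does not attempt a direct estimate but tracks constants through that existing machinery. One works with the functions $\alpha_k$ indexed by the complexity $k$ of the maximal short-boundary complex, establishes a one-step inequality over the full circle of the form $(A_\tau\alpha_k)(\omega)\le c\,e^{-(1-2\delta)\tau}\alpha_k(\omega) + (\text{higher-}k\text{ terms}) + b$, iterates it $m\approx\tau^{-1}\log(1/|I|)$ times to pass from $\sphere^1$ to the arc $I$, and inducts downward in $k$ starting from the maximal complexity where the higher-$k$ terms vanish. The dependence $c_I=O(|I|^{-(1-2\delta)})$ comes from the accumulated factor $(e^{-(1-2\delta)\tau})^{-m+1}$ in the iteration. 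Your dyadic-shell argument cannot substitute for this induction; to repair the proof you would effectively have to rebuild the complexity structure.
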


The proof of Proposition~\ref{prop:constants_improved} is at the end of the appendix.
We first prove Theorem~\ref{thm:secCount} assuming Proposition~\ref{prop:constants_improved}.

\begin{proof}[Proof of Theorem~\ref{thm:secCount} assuming Proposition~\ref{prop:constants_improved}]
See~\cite[Theorem 1.8]{MR3959357}.
Explicit bounds for $c_I$ are not discussed there, but the argument in fact gives bounds as we now discuss.
From Proposition~\ref{prop:constants_improved}, we get that $c_{I'}=c_I = O \left(\frac{1}{|I|^{1-2\delta}}\right)$, where the implied constant depends on only on genus of the surface.
We will also use $\alpha(X) = 1/\ell(X)^{1+\delta}$.
For our lower bound on $R$, we can then take
\[
R_0=O \left( \frac{1}{|I|^{(2-2\delta)/(1-2\delta)}} \cdot \frac {1}{\ell(X)^{(1+\delta)/(1-2\delta)}} \right).
\]
Since we can choose $\delta$ as small as we wish (in particular, we can take $\delta = \epsilon/(2+2\epsilon)$), we get for every $\epsilon>0$,
\[R_0= \frac{C(X,\epsilon)}{|I|^{2+\epsilon}},
\]
where
\[
C(X,\epsilon) = O\left(\frac{1}{\ell(X)^{(1+(\epsilon/(2+2\epsilon))/(1-2 \epsilon/(2+2\epsilon))}}\right).
\]
We claim that we can take $C(X,\epsilon)$ to depend continuously on $X$.  It suffices to show that $\ell(X)$, the length of the shortest saddle connection, depends continuously on $X$.  We claim that $\ell$ equals the ``flat systole function'' $f$, which is defined as the length of the shortest curve or arc (starting/ending at zeros) that is not homotopic to a point (relative to zeros, in the case of an arc).  Clearly $f\le \ell$, since any saddle connection is such an arc.  To see that $\ell\le f$, note that any such curve/arc can be tightened to either (i) a union of saddle connections, or (ii) a closed geodesic, parallel copies of which form a cylinder.  In case (i), picking any one of the saddle connections in the union gives a saddle connection that has length at most that of the original curve/arc.  In case (ii), there is a saddle connection on the boundary of the cylinder that has length at most that of the original curve/arc.  Finally, the flat systole function clearly varies continuously, hence so does $\ell$.
\end{proof}

\begin{proof}[Proof of Proposition~\ref{prop:constants_improved}]
The result follows from the following modifications to \cite{MR3959357}.
\begin{itemize}
\item
\textbf{Explicit constants in \cite[Proposition 5.5, p.111]{MR3959357}:}  We can take $c_I = c_I^{(k)} = O_k\left( \frac{1}{|I|^{1-2\delta}}\right)$.

\item
\textbf{Addendum to \cite[Proof of Proposition 2.1 assuming Proposition 5.5, p.111]{MR3959357}:} Note that by the definition of $\alpha_i$, we have $\alpha_1(X) \ge \alpha_i(X)$, for each $i$ and every $X$ (this is because $\alpha_i(X)$ is defined in terms of the saddle connections on the boundary of a complex of complexity $i$; any such saddle connection forms a complex of complexity $1$, and thus is included in the definition of $\alpha_1$.)
  Thus we can replace the sum $\sum_{j\ge k}\alpha_j(X)$ that we get from \cite[Proposition 5.5]{MR3959357} with $M \cdot \alpha_1(X) = M/\ell(X)^{1+\delta}$, where $M$ is the complexity of $X$ (and then we can absorb the constant $M$ into the constant $c_I)$.

\item
\textbf{Explicit constants in \cite[Lemma 5.8, p.114]{MR3959357}:}  We can take $t_0 \approx \tau + \log\frac{1}{|I|}$.

\item
\textbf{Addendum to \cite[Proof of Proposition 5.5, p.116-118]{MR3959357}:}

\begin{itemize}
\item
We can take $m \approx t_0(\tau,|I|)/\tau \approx \frac{1}{\tau} \log\frac{1}{|I|}$.
\item
We can take $w_{\tau,I}^{(k)} = c_{I}^{(k+1)}c_2w_{\tau}$.
\item
We can take $c_{\tau,m} =\left( e^{-\tau(1-2\delta)}\right) ^{-m+1} + w_{\tau,I} \cdot e^{\tau(1-2\delta)} .$  From above, we have $w_{\tau,I}^{(k)} = c_{I}^{(k+1)}c_2w_{\tau}$.  For the maximal possible $k$, there are no terms from higher complexity, i.e. all the higher $\alpha_i$ are $0$, so, for this $k$, we get
    $$c_{\tau,I}^{(k)} = O_{k}\left( \left( e^{-\tau(1-2\delta)}\right)^{-m+1} \right) =O_k \left( e^{\tau(1-2\delta)\frac{1}{\tau}\log\frac{1}{|I|}} \right) = O_k \left( \frac{1}{|I|^{1-2\delta}} \right).$$
    Then inducting down by complexity, we get the same result for all $k$ (with different implied constant in the $O_k$).
    \qedhere
\end{itemize}
\end{itemize}
\end{proof}

\section{Length function}
\label{app:calculus}

In this appendix we collect various simple results about the function $f_v(t) = \nbar \di^t v \nbar$ defined on $\R$ for any vector $v \in \R^2$ with positive entries, and its relative $f_{v,w} = f_v - f_w$.
We have
\[
f_v(t) = \sqrt{e^{2t} v_1^2 + e^{-2t} v_2^2} = f_{\di^t v}(0)
\]
and define
\[
h_v(t) = e^{2t} v_1^2 - e^{-2t} v_2^2 = h_{\di^t v}(0)
\]
for convenience.
First note that
\[
f_v'(t) = \frac{h_v(t)}{f_v(t)}
\qquad
h_v'(t) = 2 f_v(t)^2
\]
from which
\begin{equation}
\label{eqn:secondDeriv}
f_v''(t)
=
\frac{2f_v(t)^4 - h_v(t)^2}{f_v(t)^3}
=
f_v(t) + \frac{4 v_1^2 v_2^2}{f_v(t)^3}
\end{equation}
follows.

Writing $V = \di^t v$ and $\theta_V$ for its argument we can write
\begin{equation}
\label{eqn:firstDerivative}
f_v'(t)
=
\frac{V_1^2 - V_2^2}{\nbar V \nbar} = \nbar V \nbar \Big( (\cos \theta_V)^2 - (\sin \theta_V)^2 \Big)
=
\nbar V \nbar \cos 2 \theta_V
\end{equation}
and
\[
f_v''(t)
=
\frac{2 (V_1^2 + V_2^2)^2 - (V_1^2 - V_2^2)^2}{\nbar V \nbar^3}
=
\frac{\nbar V \nbar^4 + 4 V_1^2 V_2^2}{\nbar V \nbar^3}
=
\nbar V \nbar + \frac{4V_1^2 V_2^2}{\nbar V \nbar^3}
=
\nbar V \nbar \Big (1 + (\sin 2 \theta_V)^2 \Big)
\]
for all $t \in \R$ from $2 V_1 V_2 = \nbar V \nbar^2 \sin 2 \theta_V$.
These calculations show $f_v$ is concave everywhere with a global minimum at
\[
m(v) = \frac{1}{2} \log \frac{v_2}{v_1}
\]
where $\theta_V = \frac{\pi}{4}$.

We now turn to $f_{v,w}$ assuming $v \ne w$.
We assume without loss of generality that $v$ and $w$ are in the first quadrant.
If $f_{v,w}$ has a zero it must be at
\begin{equation}
\label{eqn:zero}
r(v,w) = \frac{1}{4} \log \frac{v_2^2 - w_2^2}{w_1^2 - v_1^2}
\end{equation}
giving the following fact.

\begin{fact}
\label{fac:uniqueZero}
\leavevmode
\begin{itemize}
\item
If $w_1 > v_1$ and $v_2 > w_2$ the function $f_{v,w}$ has a unique zero.
\item
If $v_1 > w_1$ and $w_2 > v_2$ the function $f_{v,w}$ has a unique zero.
\item
In no other case does $f_{v,w}$ have a zero.
\end{itemize}
\end{fact}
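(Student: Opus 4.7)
The approach is elementary: square to eliminate the radicals. Since $f_v(t), f_w(t) > 0$ for all $t$, one has $f_{v,w}(t) = 0$ iff $f_v(t)^2 = f_w(t)^2$, which after multiplying through by $e^{2t}$ and rearranging becomes the single scalar equation
\[
e^{4t}(v_1^2 - w_1^2) = w_2^2 - v_2^2.
\]
This identity will drive the entire argument.

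First suppose $v_1 = w_1$. Then the displayed identity forces $v_2^2 = w_2^2$, hence $v_2 = w_2$ since both coordinates are positive, contradicting the standing assumption $v \ne w$; so no zero exists. Now suppose $v_1 \ne w_1$. The identity then rearranges to $e^{4t} = (v_2^2 - w_2^2)/(w_1^2 - v_1^2)$, which admits a real solution iff the right-hand side is strictly positive. Because $v$ and $w$ lie in the open first quadrant, strict positivity of this ratio is equivalent to numerator and denominator sharing a sign, which (given $v_1 \ne w_1$) is exactly one of the two configurations listed in the first two bullets of the fact. In the remaining configurations with $v_1 \ne w_1$ --- namely $v_2 = w_2$, where the identity forces the impossibility $v_1 = w_1$, or the two sign mismatches between $(v_1, w_1)$ and $(v_2, w_2)$ --- the ratio is either zero or negative, so no real $t$ solves the equation. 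Together with the $v_1 = w_1$ case, this exhausts the third bullet.

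Finally, when a solution does exist it is unique because $t \mapsto e^{4t}$ is strictly monotone, and it is precisely the value $r(v,w)$ already displayed in \eqref{eqn:zero}. There is no substantive obstacle here beyond bookkeeping the boundary cases in which one pair of coordinates coincides; each such case collapses to either a contradiction with $v \ne w$ or a non-positive right-hand side.
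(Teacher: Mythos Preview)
Your proof is correct and follows essentially the same route as the paper: the paper simply records the formula \eqref{eqn:zero} for the putative zero and declares the fact to follow, which amounts to your squaring argument and the observation that $e^{4t} = (v_2^2 - w_2^2)/(w_1^2 - v_1^2)$ has a real solution iff the ratio is positive. You have merely spelled out the boundary cases ($v_1 = w_1$ or $v_2 = w_2$) that the paper leaves implicit.
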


\begin{fact}
\label{fac:decreasing}
If $f_{v,w}$ has a zero it is either strictly increasing or strictly decreasing. In other words, if $f_{v,w}$ has a zero then $f_{v,w}'$ does not.
\end{fact}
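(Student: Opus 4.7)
The plan is to upgrade the statement to the cleaner claim that $f_{v,w}'(t)$ has one sign throughout $\R$. By Fact~\ref{fac:uniqueZero}, if $f_{v,w}$ has a zero then, up to swapping $v$ and $w$ (which replaces $f_{v,w}$ with $-f_{v,w}$), we are in the case $v_1 > w_1$ and $v_2 < w_2$. I will show that in this case $f_{v,w}'(t) > 0$ for every $t \in \R$, which gives both strict monotonicity and the absence of zeros of the derivative.

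Since $f_v, f_w > 0$, the sign of
\[
f_{v,w}'(t) \;=\; \frac{h_v(t)}{f_v(t)} - \frac{h_w(t)}{f_w(t)}
\]
agrees with the sign of $h_v f_w - h_w f_v$. The key step is to multiply by the positive factor $f_v + f_w$, producing the decomposition
\[
(h_v f_w - h_w f_v)(f_v + f_w) \;=\; f_v f_w\,(h_v - h_w) \;+\; (h_v f_w^2 - h_w f_v^2).
\]
The algebraic miracle that makes the argument work is that the second summand is independent of $t$: substituting $f_v^2 = e^{2t}v_1^2 + e^{-2t}v_2^2$ and $h_v = e^{2t}v_1^2 - e^{-2t}v_2^2$ (and likewise for $w$), the $e^{\pm 4t}$ contributions cancel and leave
\[
h_v f_w^2 - h_w f_v^2 \;=\; 2\bigl(v_1^2 w_2^2 - v_2^2 w_1^2\bigr).
\]
This is strictly positive in our case because $v_1 w_2 > v_2 w_1$, using $v_1 > w_1$ and $w_2 > v_2$ on positive quantities. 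Meanwhile
\[
h_v - h_w \;=\; e^{2t}(v_1^2 - w_1^2) + e^{-2t}(w_2^2 - v_2^2)
\]
is a sum of two positive terms in our case, so $f_v f_w(h_v - h_w) > 0$ as well.

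Adding the two positive pieces gives $(h_v f_w - h_w f_v)(f_v + f_w) > 0$, and dividing by the positive factor $f_v + f_w$ yields $h_v f_w - h_w f_v > 0$, i.e.\ $f_{v,w}'(t) > 0$ for every $t$. The main obstacle is simply spotting the right identity: multiplying by $f_v + f_w$ is the move that forces the $t$-dependence to drop out of one of the two summands. Once the identity $h_v f_w^2 - h_w f_v^2 = 2(v_1^2 w_2^2 - v_2^2 w_1^2)$ is in hand, every remaining step is a routine sign check based on $v_1 > w_1$ and $w_2 > v_2$.
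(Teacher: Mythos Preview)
Your proof is correct and takes a genuinely different route from the paper's. The paper argues geometrically: writing $f_v'(t) = \nbar V \nbar \cos 2\theta_V$ and $f_v''(t) = \nbar V \nbar (1 + (\sin 2\theta_V)^2)$, it locates the minima $m(v),m(w)$ of $f_v,f_w$, observes $f_{v,w}$ is monotone on $[m(w),m(v)]$ directly, and then extends monotonicity to each half-line by a case split on the position of the zero $r(v,w)$ relative to $m(w)$ and $m(v)$, comparing either first derivatives (via angles and lengths) or second derivatives in each regime. Your argument is purely algebraic and case-free: multiplying $h_v f_w - h_w f_v$ by $f_v + f_w$ splits off the constant $h_v f_w^2 - h_w f_v^2 = 2(v_1^2 w_2^2 - v_2^2 w_1^2)$, and both summands are then positive by inspection under $v_1 > w_1$, $w_2 > v_2$. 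Your approach is shorter and avoids the somewhat delicate comparison of $f_v''$ and $f_w''$ that the paper needs in one of its subcases; the paper's argument, on the other hand, makes the geometry of the situation (the interplay of the angles $\theta_V,\theta_W$ with $\tfrac{\pi}{4}$) more visible.
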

\begin{proof}
Switching the roles of $v$ and $w$ if necessary, we many assume $w_1 > v_1$ and $v_2 > w_2$.
We then have
\[
m(w)
=
\frac{1}{2} \log \frac{w_2}{w_1}
<
\frac{1}{2} \log \frac{v_2}{v_1}
=
m(v)
\]
and on the interval $(m(w),m(v)]$ the function $f_w$ is strictly increasing while on $[m(w),m(v))$ the function $f_v$ is strictly decreasing.
Thus $f_{v,w}$ is strictly decreasing on $[m(w),m(v)]$.

Next, note that since $v_2 > w_2$ we have $f_{v,w}(t) > 0$ for $t < r(v,w)$ and $f_{v,w}(t) < 0$ for $t > r(v,w)$.

Consider the case that $m(w) \le r(v,w)$.
Thus we have $\nbar W \nbar \le \nbar V \nbar$ and $\theta_V > \theta_W \ge \frac{\pi}{4}$ for all $t \le m(w)$.
Accordingly
\[
f_v'(t)
=
\nbar V \nbar \cos(2 \theta_V)
<
\nbar W \nbar \cos(2 \theta_W)
=
f_w'(t)
\]
and $f_{v,w}'$ is negative on $(-\infty,m(w)]$ whence $f_{v,w}$ is strictly decreasing on $(-\infty, m(v)]$.

In the case that $r(v,w) < m(w)$ then the above argument shows only that $f_{v,w}'$ is negative on $(-\infty,r(v,w)]$.
On the interval $(r(v,w),m(w))$ we have $\nbar W \nbar > \nbar V \nbar$ and $\theta_V > \theta_W > \frac{\pi}{4}$ giving $f_v'' < f_w''$ thereon.
Thus we may extend negativity of $f_{v,w}'$ to $(-\infty,m(w)]$ and $f_{v,w}$ is again strictly decreasing on all of $(-\infty,m(v)]$.

The cases $r(v,w) \le m(v)$ and $m(v) < r(v,w)$ are similar to the above, and altogether $f_{v,w}$ is strictly decreasing on all of $\R$.
\end{proof}

\printbibliography

@book {MR1880691,
    AUTHOR = {Knapp, Anthony W.},
     TITLE = {Representation theory of semisimple groups},
    SERIES = {Princeton Landmarks in Mathematics},
      NOTE = {An overview based on examples,
              Reprint of the 1986 original},
 PUBLISHER = {Princeton University Press, Princeton, NJ},
      YEAR = {2001},
     PAGES = {xx+773},
      ISBN = {0-691-09089-0},
   MRCLASS = {22E46 (22-01 22E30)},
  MRNUMBER = {1880691},
}

@article {MR3959357,
    AUTHOR = {Dozier, Benjamin},
     TITLE = {Equidistribution of saddle connections on translation
              surfaces},
   JOURNAL = {J. Mod. Dyn.},
  FJOURNAL = {Journal of Modern Dynamics},
    VOLUME = {14},
      YEAR = {2019},
     PAGES = {87--120},
      ISSN = {1930-5311},
   MRCLASS = {37E35 (32G15)},
  MRNUMBER = {3959357},
MRREVIEWER = {Serge E. Troubetzkoy},
       DOI = {10.3934/jmd.2019004},
       URL = {https://doi-org.manchester.idm.oclc.org/10.3934/jmd.2019004},
}

@article {MR4031118,
    AUTHOR = {Nevo, Amos and R\"{u}hr, Rene and Weiss, Barak},
     TITLE = {Effective counting on translation surfaces},
   JOURNAL = {Adv. Math.},
  FJOURNAL = {Advances in Mathematics},
    VOLUME = {360},
      YEAR = {2020},
     PAGES = {106890, 29},
      ISSN = {0001-8708},
   MRCLASS = {37D40 (37C29)},
  MRNUMBER = {4031118},
       DOI = {10.1016/j.aim.2019.106890},
       URL = {https://doi-org.manchester.idm.oclc.org/10.1016/j.aim.2019.106890},
}

@article {MR4042164,
    AUTHOR = {Chaika, Jon and Robertson, Donald},
     TITLE = {Uniform distribution of saddle connection lengths},
      NOTE = {With an appendix by Daniel El-Baz and Bingrong Huang},
   JOURNAL = {J. Mod. Dyn.},
  FJOURNAL = {Journal of Modern Dynamics},
    VOLUME = {15},
      YEAR = {2019},
     PAGES = {329--343},
      ISSN = {1930-5311},
   MRCLASS = {37D40 (11K06 37C29)},
  MRNUMBER = {4042164},
       DOI = {10.3934/jmd.2019023},
       URL = {https://doi.org/10.3934/jmd.2019023},
}

@article {MR1053805,
    AUTHOR = {Masur, Howard},
     TITLE = {The growth rate of trajectories of a quadratic differential},
   JOURNAL = {Ergodic Theory Dynam. Systems},
  FJOURNAL = {Ergodic Theory and Dynamical Systems},
    VOLUME = {10},
      YEAR = {1990},
    NUMBER = {1},
     PAGES = {151--176},
      ISSN = {0143-3857},
   MRCLASS = {30F30 (30F60 58F11 58F17)},
  MRNUMBER = {1053805},
MRREVIEWER = {Masakazu Shiba},
       DOI = {10.1017/S0143385700005459},
       URL = {https://doi.org/10.1017/S0143385700005459},
}

@incollection {MR955824,
    AUTHOR = {Masur, Howard},
     TITLE = {Lower bounds for the number of saddle connections and closed
              trajectories of a quadratic differential},
 BOOKTITLE = {Holomorphic functions and moduli, {V}ol. {I} ({B}erkeley,
              {CA}, 1986)},
    SERIES = {Math. Sci. Res. Inst. Publ.},
    VOLUME = {10},
     PAGES = {215--228},
 PUBLISHER = {Springer, New York},
      YEAR = {1988},
   MRCLASS = {30F30},
  MRNUMBER = {955824},
MRREVIEWER = {K. Strebel},
       DOI = {10.1007/978-1-4613-9602-4_20},
       URL = {https://doi.org/10.1007/978-1-4613-9602-4_20},
}

\end{document}